\newtheorem{theorem}{Theorem}[section]
\newtheorem{corollary}[theorem]{Corollary}
\newtheorem{lemma}[theorem]{Lemma}
\newtheorem{example}[theorem]{Example}
\newtheorem{remark}[theorem]{Remark}
\numberwithin{equation}{section}
\def\bR{\mathbb{R}}
\def\bC{\mathbb{C}}
\def\bN{\mathbb{N}}
\def\bH{\mathbb{H}}
\def\bP{\mathbb{P}}
\def\bM{\mathbb{M}}
\def\Tr{\mathrm{Tr}}
\def\eps{\varepsilon}
\begin{document}
\allowdisplaybreaks

\centerline{\LARGE Log-majorization and matrix norm inequalities}
\medskip
\centerline{\LARGE with application to quantum information}

\bigskip
\bigskip
\centerline{\large
Fumio Hiai\footnote{{\it E-mail:} hiai.fumio@gmail.com
\ \ {\it ORCID:} 0000-0002-0026-8942}}

\medskip
\begin{center}
$^1$\,Graduate School of Information Sciences, Tohoku University, \\
Aoba-ku, Sendai 980-8579, Japan
\end{center}

\medskip

\begin{abstract}
We are concerned with log-majorization for matrices in connection with the multivariate Golden--Thompson
trace inequality and the Karcher mean (i.e., a multivariate extension of the weighted geometric mean).
We show an extension of Araki's log-majorization and apply it to the $\alpha$-$z$-R\'enyi divergence in
quantum information. We discuss the equality cases in the multivariate trace inequality of Golden--Thompson
type and in the norm inequality for the Karcher mean. The paper includes an appendix to correct the proof
of the author's old result on the equality case in the norm inequality for the weighted geometric mean.

\bigskip\noindent
{\it 2020 Mathematics Subject Classification:}
15A45, 47A64, 81P17

\medskip\noindent
{\it Keywords and phrases:}
Positive (semi)definite matrix, Log-majorization, Weighted geometric mean, Karcher mean,
Unitarily invariant norm, $\alpha$-$z$-R\'enyi divergence

\end{abstract}

\section{Introduction}\label{Sec-1}

Several types of majorizations originally due to Hardy, Littlewood and P\'olya are known for the
eigenvalues and the singular values of matrices (also Hilbert space operators), as was fully clarified in the
book \cite{MOA} (also in \cite{An,Bh,Hi0,Hi1} and many others). These majorizations give rise to powerful
tools in deriving various matrix norm and trace inequalities for matrices. The notion of majorization of
multiplicative type called log-majorization (whose definition is given in Sec.~\ref{Sec-2}), as well as
the additive type (weak) majorization, is sometimes very useful in matrix theory. For instance, the
log-majoriation of Golden--Thompson type given by Araki \cite{Ar} has been used to generalize the classical
Golden--Thompson and the Araki--Lieb--Thirring trace inequalities with applications to matrix analysis,
mathematical physics, quantum information, etc. On the other hand, the log-majorization shown in \cite{AH}
for the weighted geometric mean is regarded as complementary to the above type, giving norm inequalities of
complementary Golden--Thompson type. In \cite{Hi} we characterized the equality cases in the norm
inequalities derived from the above-mentioned Araki's log-majorization and its complementary version.

In the present paper we obtain miscellaneous results related to log-majorization. First, in Sec.~\ref{Sec-2}
(Theorem \ref{T-2.1}) we show an extension of Araki's log-majorization which is similar to that given in \cite{Hi2}.
The $\alpha$-$z$-R\'enyi divergence $D_{\alpha,z}(\rho\|\sigma)$ is defined for positive semidefinite matrices
$\rho,\sigma$ with two parameters $\alpha\ge0$ and $z>0$, that has recently been in active consideration in
quantum information. The monotonicity of $D_{\alpha,z}(\rho\|\sigma)$ in $z$ is easy from Araki's
log-majorization, as discussed in \cite{LT,MH2}. In Sec.~\ref{Sec-3} (Theorem \ref{T-3.1}), by use of the
extended Araki's log-majorization in Sec.~\ref{Sec-2} we prove that $D_{\alpha,z}(\rho\|\sigma)$ is monotone
increasing in the parameter $\alpha$ for any fixed $z$.

The classical Golden--Thompson trace inequality $\Tr\,e^{H_1+H_2}\le\Tr\,e^{H_1}e^{H_2}$ for Hermitian
matrices $H_1,H_2$ cannot be extended in this form to more than two matrices, while an integral form for
three matrices was found by Lieb in \cite{Li}. In \cite{SBT} the authors established the trace inequality in the
form of an integral average, extending the Golden--Thompson and the Araki--Lieb--Thirring inequalities
as well as Lieb's triple matrix version to arbitrary many matrices, and applied it to recoverability questions in
quantum information. In \cite{HKT} we generalized the trace inequality in \cite{SBT} to the form of
log-majorization. Then it might be interesting to find whether or not the characterization of the equality case in
the norm inequality of Golden--Thompson type in \cite{Hi} can be extended to the multivariable case. In
Sec.~\ref{Sec-4} (Example \ref{E-4.1}) we provide an example suggesting that a similar characterization of
the equality case is not possible for the multivariate trace inequality.

The (weighted) geometric mean, first introduced in \cite{PW}, is the most studied operator mean in
Kubo--Ando's sense \cite{KA}. Its multivariate extension was developed in the Riemannian geometry approach
in \cite{BH,LL1,Mo}, which has been further studied by many authors in the name of the Karcher mean.
Moreover, the log-majorization for the Karcher mean was observed in \cite{HP} based on the works \cite{BK,Ya},
extending that for the weighted geometric mean in \cite{AH}. In Sec.~\ref{Sec-5} (Theorem \ref{T-5.4}) we
characterize the equality case in the norm inequality of the Karcher mean derived from log-majorization. The
characterizations are similar to those in the two-variable case in \cite[Theorem 3.1]{Hi}, though not fully
described as those in \cite{Hi} due to the fact that analyticity of the Karcher mean is not known. In
Sec.~\ref{Sec-6} we examine the analyticity question of the Karcher mean.

This paper includes an appendix, where we update the proof of \cite[Theorem 2.1]{Hi}. As a matter of fact, we
have recently found a serious gap in the proof of \cite{Hi}, so that we take this opportunity to fix it.

\section{Extension of Araki's log-majorization}\label{Sec-2}

We write $\bM_m$ for the $m\times m$ matrices and $\bM_m^+$ for the $m\times m$ positive semidefinite
matrices. We write $A>0$ if $A\in\bM_m$ is positive definite. For each $A\in\bM_m^+$ let
$\lambda(A)=(\lambda_1(A),\dots,\lambda_m(A))$ denote the eigenvalues of $A$ in decreasing order (i.e.,
$\lambda_1(A)\ge\dots\ge\lambda_m(A)$) counting multiplicities. Also, for each $X\in\bM_m$ let
$s(X)=(s_1(X),\dots,s_m(X))$ denote the singular values of $X$ in decreasing order counting multiplicities.
The usual trace of $X\in\bM_m$ is denoted by $\Tr\,X$.

For $a=(a_1,\dots,a_m)$ and $b=(b_1,\dots,b_m)$ in $\bR_+^m=[0,\infty)^m$ the \emph{log-majorization}
$a\prec_{\log}b$ is defined as
\[
\prod_{i=1}^ka_i^\downarrow\le\prod_{i=1}^kb_i^\downarrow\quad\mbox{for}\ k=1,\dots,m-1
\]
and
\[
\prod_{i=1}^ma_i^\downarrow=\prod_{i=1}^mb_i^\downarrow,\quad
\mbox{i.e.,}\quad\prod_{i=1}^ma_i=\prod_{i=1}^mb_i,
\]
where $(a_1^\downarrow,\dots,a_m^\downarrow)$ is the decreasing rearrangement of $(a_1,\dots,a_m)$. For
$A,B\in\bM_m^+$ we write the log-majorization $A\prec_{\log}B$ if $\lambda(A)\prec_{\log}\lambda(B)$, that is,
$\prod_{i=1}^k\lambda_i(A)\le\prod_{i=1}^k\lambda_i(B)$ for $k=1,\dots,m-1$ and $\det A=\det B$. For details on
majorization theory for matrices; see, e.g., \cite{An,Bh,Hi1,MOA}. In particular, it is well known
\cite[Proposition 4.4.13]{Hi1} that $A\prec_{\log}B$ implies the \emph{weak majorization} $A\prec_wB$, i.e.,
$\sum_{i=1}^k\lambda_i(A)\le\sum_{i=1}^k\lambda_i(B)$ for $k=1,\dots,m$, and the latter is equivalent to that
$\|A\|\le\|B\|$ holds for all unitarily invariant norms $\|\cdot\|$, where a norm $\|\cdot\|$ on $\bM_m$ is said to be
\emph{unitarily invariant} if $\|UXV\|=\|X\|$, $X\in\bM_m$, for all unitaries $U,V$.

Araki's log-majorization \cite{Ar} (also \cite[Theorem A]{AH}) says that for every $A,B\in\bM_m^+$,
\begin{align}\label{F-2.1}
A^{p/2}B^pA^{p/2}\prec_{\log}(A^{1/2}BA^{1/2})^p,\qquad0<p\le1,
\end{align}
or equivalently,
\[
(A^{p/2}B^pA^{p/2})^{1/p}\prec_{\log}(A^{q/2}B^qA^{q/2})^{1/q},\qquad0<p\le q,
\]
which gives
\begin{align}\label{F-2.2}
\big\|(A^{p/2}B^pA^{p/2})^{1/p}\big\|\le\big\|(A^{q/2}B^qA^{q/2})^{1/q}\big\|,\qquad0<p\le q,
\end{align}
for any unitarily invariant norm $\|\cdot\|$. In particular, when $\|\cdot\|$ is the trace-norm, this becomes the
so-called Araki--Lieb--Thirring trace inequality \cite{Ar}. In \cite[Theorem 2.1]{Hi} we characterized the equality
case in inequality \eqref{F-2.2} when $\|\cdot\|$ is strictly increasing, i.e., for any $A,B\in\bM_m^+$ with
$A\le B$, $\|A\|=\|B\|$ implies $A=B$. For instance, the Schatten $p$-norms
$\|X\|_p:=(\Tr\,|X|^p)^{1/p}$ for $1\le p<\infty$ (in particular, the trace-norm) are
strictly increasing. In Appendix~\ref{Sec-A} of this paper we supply the corrected version of the proof of
\cite[Theorem 2.1]{Hi}, because the original proof contains a serious gap.\footnote{
Although \cite{Hi} is an old paper from 30 years ago, the author realized the gap quite recently in the course
of preparing the present paper, and came up to his idea to update the proof on this occasion.}

A generalization of Araki's log-majorization was given in \cite{Hi2}. The following is another generalization in a
similar vein.

\begin{theorem}\label{T-2.1}
Let $A_j,B_j\in\bM_m^+$, $j=1,2$, be such that $A_1A_2=A_2A_1$ and $B_1B_2=B_2B_1$. Then for any
$\theta\in[0,1]$,
\begin{align}\label{F-2.3}
\lambda\bigl(\bigl(A_1^\theta A_2^{1-\theta}\bigr)^{1/2}\bigl(B_1^\theta B_2^{1-\theta}\bigr)
\bigl(A_1^\theta A_2^{1-\theta}\bigr)^{1/2}\bigr)
\prec_{\log}\lambda^\theta\bigl(A_1^{1/2}B_1A_1^{1/2}\bigr)
\lambda^{1-\theta}\bigl(A_2^{1/2}B_2A_2^{1/2}\bigr),
\end{align}
where
\[
\lambda^\theta\bigl(A_1^{1/2}B_1A_1^{1/2}\bigr)\lambda^{1-\theta}\bigl(A_2^{1/2}B_2A_2^{1/2}\bigr)
:=\bigl(\lambda_i\bigl(A_1^{1/2}B_1A_1^{1/2}\bigr)^\theta
\lambda_i\bigl(A_2^{1/2}B_2A_2^{1/2}\bigr)^{1-\theta}\bigr)_{i=1}^m.
\]
For $\theta=0,1$ in the above, we can adopt either convention of [$A_j^0:=I$, $0^0:=1$ (for scalar)] or
[$A_j^0:=\mbox{the support projection of $A_j$}$, $0^0=0$ (for scalar)].
\end{theorem}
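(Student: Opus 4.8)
The plan is to follow the classical route to Araki-type log-majorizations: prove the assertion for the largest eigenvalue together with the matching determinant identity, and then lift it to all $k$ by passing to antisymmetric tensor powers. Two preliminary reductions are convenient. Replacing $A_j$ by $A_j+\eps I$ and $B_j$ by $B_j+\eps I$ (which still commute within each pair) and letting $\eps\downarrow0$ reduces everything to $A_1,A_2,B_1,B_2>0$, since each term of \eqref{F-2.3} depends continuously on the data and $\prec_{\log}$ is a closed relation. Moreover I may assume $\theta\in(0,1)$: under the first convention the endpoints $\theta\in\{0,1\}$ are a trivial identity, and under the second convention they follow from the open case by continuity in $\theta$ (as $\theta\downarrow0$, $A_j^\theta$ tends to the support projection of $A_j$ and $\lambda_i(\cdot)^\theta$ tends to the indicator of $\{\lambda_i(\cdot)>0\}$, consistently with $0^0=0$). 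So assume $A_1,A_2,B_1,B_2>0$ and $\theta\in(0,1)$, put $C:=A_1^\theta A_2^{1-\theta}$, $D:=B_1^\theta B_2^{1-\theta}$, $X_j:=A_j^{1/2}B_jA_j^{1/2}$, and note $C^{1/2}=A_1^{\theta/2}A_2^{(1-\theta)/2}$, $D^{1/2}=B_1^{\theta/2}B_2^{(1-\theta)/2}$ by commutativity. As the sequence $\bigl(\lambda_i(X_1)^\theta\lambda_i(X_2)^{1-\theta}\bigr)_i$ is already nonincreasing, \eqref{F-2.3} amounts to the partial-product bounds $\prod_{i=1}^k\lambda_i(C^{1/2}DC^{1/2})\le\prod_{i=1}^k\lambda_i(X_1)^\theta\lambda_i(X_2)^{1-\theta}$ for $1\le k\le m-1$ together with equality at $k=m$.

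The core is the case $k=1$, namely $\|C^{1/2}DC^{1/2}\|_\infty\le\|X_1\|_\infty^\theta\|X_2\|_\infty^{1-\theta}$. Since $\|C^{1/2}DC^{1/2}\|_\infty=\|C^{1/2}D^{1/2}\|_\infty^2$ and $\|X_j\|_\infty=\|A_j^{1/2}B_j^{1/2}\|_\infty^2$, this is the square of
\[
\|A_1^{\theta/2}A_2^{(1-\theta)/2}B_1^{\theta/2}B_2^{(1-\theta)/2}\|_\infty\le\|A_1^{1/2}B_1^{1/2}\|_\infty^{\theta}\,\|A_2^{1/2}B_2^{1/2}\|_\infty^{1-\theta},
\]
which I would derive from Hadamard's three-lines theorem applied to $F(\zeta):=A_1^{\zeta/2}A_2^{(1-\zeta)/2}B_1^{\zeta/2}B_2^{(1-\zeta)/2}$ on the strip $0\le\mathrm{Re}\,\zeta\le1$. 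This $F$ is entire and bounded on the strip (on each vertical line $\|A_1^{\zeta/2}A_2^{(1-\zeta)/2}\|_\infty=\|A_1^{(\mathrm{Re}\,\zeta)/2}A_2^{(1-\mathrm{Re}\,\zeta)/2}\|_\infty$, because $A_1$ and $A_2$ commute so the imaginary part contributes only a unitary factor, and similarly for the $B$-part). On the edge $\mathrm{Re}\,\zeta=0$ write $A_1^{it/2}A_2^{(1-it)/2}=A_2^{1/2}U_t$ with $U_t:=A_2^{-it/2}A_1^{it/2}$ unitary and commuting with $A_2^{1/2}$, and $B_1^{it/2}B_2^{(1-it)/2}=B_2^{1/2}V_t$ with $V_t$ unitary commuting with $B_2^{1/2}$; then $F(it)=A_2^{1/2}U_tB_2^{1/2}V_t=U_t\bigl(A_2^{1/2}B_2^{1/2}\bigr)V_t$, so $\|F(it)\|_\infty=\|A_2^{1/2}B_2^{1/2}\|_\infty$. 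Symmetrically, on $\mathrm{Re}\,\zeta=1$ one gets $F(1+it)=U_t'\bigl(A_1^{1/2}B_1^{1/2}\bigr)V_t'$ with $U_t',V_t'$ unitary, whence $\|F(1+it)\|_\infty=\|A_1^{1/2}B_1^{1/2}\|_\infty$. The three-lines theorem then gives $\|F(\theta)\|_\infty\le\|A_1^{1/2}B_1^{1/2}\|_\infty^{\theta}\|A_2^{1/2}B_2^{1/2}\|_\infty^{1-\theta}$, and $F(\theta)=A_1^{\theta/2}A_2^{(1-\theta)/2}B_1^{\theta/2}B_2^{(1-\theta)/2}$, which is the displayed inequality.

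To reach general $k$, apply the case $k=1$ to the $k$-fold antisymmetric tensor powers: $\wedge^kA_1,\wedge^kA_2$ commute, $\wedge^kB_1,\wedge^kB_2$ commute, and using multiplicativity $\wedge^k(XY)=(\wedge^kX)(\wedge^kY)$, the identities $\wedge^k(M^{1/2})=(\wedge^kM)^{1/2}$ and $\wedge^k(A_1^\theta A_2^{1-\theta})=(\wedge^kA_1)^\theta(\wedge^kA_2)^{1-\theta}$ (both sides diagonal with identical entries in the induced orthonormal basis of $\wedge^k\bC^m$), together with $\|\wedge^kM\|_\infty=\prod_{i=1}^k\lambda_i(M)$ for $M\ge0$, the $k=1$ inequality for $\wedge^kA_j,\wedge^kB_j$ becomes exactly the desired partial-product bound. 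Equality at $k=m$ is the determinant identity $\det(C^{1/2}DC^{1/2})=\det C\,\det D=(\det A_1)^\theta(\det A_2)^{1-\theta}(\det B_1)^\theta(\det B_2)^{1-\theta}=\prod_i\lambda_i(X_1)^\theta\lambda_i(X_2)^{1-\theta}$, using $\det X_j=\det A_j\det B_j$. I expect the only genuinely delicate point — and the sole place where the hypotheses $A_1A_2=A_2A_1$ and $B_1B_2=B_2B_1$ enter, the statement being false without them — to be the boundary computation above: one must know that the imaginary-part factors on the two edges of the strip are not merely unitary but commute with the surviving positive factor, so that they can be absorbed into the norm. Everything else is routine bookkeeping.
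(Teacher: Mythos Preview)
Your proof is correct, and its architecture matches the paper's: reduce to the positive definite case by $\eps$-perturbation, establish the $k=1$ operator-norm inequality, lift to all $k$ via antisymmetric tensor powers, and verify the determinant identity and the endpoint conventions. Where you genuinely diverge is in the proof of the $k=1$ inequality $\|C^{1/2}DC^{1/2}\|_\infty\le\|X_1\|_\infty^\theta\|X_2\|_\infty^{1-\theta}$. The paper argues via operator means: normalize so that $\|A_j^{1/2}B_jA_j^{1/2}\|_\infty=1$, i.e., $B_j\le A_j^{-1}$; then, since $A_1,A_2$ commute and $B_1,B_2$ commute, one has $B_1^\theta B_2^{1-\theta}=B_1\#_{1-\theta}B_2$ and $A_1^{-\theta}A_2^{-(1-\theta)}=A_1^{-1}\#_{1-\theta}A_2^{-1}$, and joint monotonicity of the weighted geometric mean gives $B_1^\theta B_2^{1-\theta}\le(A_1^\theta A_2^{1-\theta})^{-1}$. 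You instead run Hadamard's three-lines theorem on $F(\zeta)=A_1^{\zeta/2}A_2^{(1-\zeta)/2}B_1^{\zeta/2}B_2^{(1-\zeta)/2}$, using that on each boundary line the imaginary-power factors are unitary and commute with the surviving $A_j^{1/2}$ (resp.\ $B_j^{1/2}$), so that $F(it)=U_t(A_2^{1/2}B_2^{1/2})V_t$ and $F(1+it)=U_t'(A_1^{1/2}B_1^{1/2})V_t'$. Both routes hinge on the commutativity hypotheses at precisely one point: the paper uses them to identify the pointwise powers with Kubo--Ando geometric means; you use them to slide the unitary factors to the outside on the boundary. The operator-mean argument is perhaps more elementary and ties the result directly to the Ando--Hiai circle of ideas, while your three-lines argument is closer in spirit to the multivariate trace inequalities discussed later in the paper. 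One small wording quibble: the ``delicate point'' is not that unitaries need to commute in order to be absorbed into the operator norm (that is free), but that the commutativity is needed to \emph{rearrange} $A_2^{1/2}U_tB_2^{1/2}V_t$ into the form $U_t(A_2^{1/2}B_2^{1/2})V_t$, so that the boundary value is exactly $\|A_2^{1/2}B_2^{1/2}\|_\infty$ rather than some larger bound.
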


\begin{proof}
Assume that $0<\theta<1$. By taking the limit from $A_j+\eps I,B_j+\eps I$ we may assume that $A_j,B_j>0$. First,
let us prove the inequality
\begin{align}\label{F-2.4}
\big\|\bigl(A_1^\theta A_2^{1-\theta}\bigr)^{1/2}\bigl(B_1^\theta B_2^{1-\theta}\bigr)
\bigl(A_1^\theta A_2^{1-\theta}\bigr)^{1/2}\big\|_\infty
\le\big\|A_1^{1/2}B_1A_1^{1/2}\big\|_\infty^\theta\big\|A_2^{1/2}B_2A_2^{1/2}\big\|_\infty^{1-\theta},
\end{align}
where $\|\cdot\|_\infty$ stands for the operator norm. To do this, by replacing $B_j$ with $b_jB_j$ for some $b_j>0$,
$j=1,2$, it suffices to assume that $\big\|A_j^{1/2}B_jA_j^{1/2}\big\|_\infty=1$ so that $B_j\le A_j^{-1}$, $j=1,2$.
Thanks to the commutativity assumption and joint monotonicity of operator means in Kubo--Ando's sense
\cite{KA,Hi1}, we then have
\[
B_1^\theta B_2^{1-\theta}=B_1\#_{1-\theta} B_2\le A_1^{-1}\#_{1-\theta} A_2^{-1}
=\bigl(A_1^\theta A_2^{1-\theta}\bigr)^{-1},
\]
where $\#_{1-\theta}$ stands for the $(1-\theta)$-weighted geometric mean. Hence
\[
\big\|\bigl(A_1^\theta A_2^{1-\theta}\bigr)^{1/2}\bigl(B_1^\theta B_2^{1-\theta}\bigr)
\bigl(A_1^\theta A_2^{1-\theta}\bigr)^{1/2}\big\|_\infty\le1
\]
so that \eqref{F-2.4} follows.

Next, for any $k=1,\dots,n$, consider the $k$-fold antisymmetric tensor powers $A_j^{\wedge k},B_j^{\wedge k}$.
Apply \eqref{F-2.4} to $A_j^{\wedge k},B_j^{\wedge k}$ and use \cite[Lemmas 4.6.2, 4.6.3]{Hi1} to obtain
\[
\prod_{i=1}^k\lambda_i\bigl(\bigl(A_1^\theta A_2^{1-\theta}\bigr)^{1/2}\bigl(B_1^\theta B_2^{1-\theta}\bigr)
\bigl(A_1^\theta A_2^{1-\theta}\bigr)^{1/2}\bigr)
\le\prod_{i=1}^k\lambda_i\bigl(A_1^{1/2}B_1A_1^{1/2}\bigr)^\theta
\lambda_i\bigl(A_2^{1/2}B_2A_2^{1/2}\bigr)^{1-\theta}.
\]
Furthermore,
\begin{align*}
\det\bigl(\bigl(A_1^\theta A_2^{1-\theta}\bigr)^{1/2}\bigl(B_1^\theta B_2^{1-\theta}\bigr)
\bigl(A_1^\theta A_2^{1-\theta}\bigr)^{1/2}\bigr)
&={\det}^\theta A_1\,{\det}^{1-\theta} A_2\,{\det}^\theta B_1\,{\det}^{1-\theta} B_2 \\
&={\det}^\theta\bigl(A_1^{1/2}B_1A_2^{1/2}\bigr)\,{\det}^{1-\theta}\bigl(A_2^{1/2}B_2A_2^{1/2}\bigr).
\end{align*}
Therefore, \eqref{F-2.3} holds when $0<\theta<1$.

Finally, when $\theta=0$ or $1$, \eqref{F-2.3} holds trivially in the first convention of $A_j^0$ and $0^0$. In the
second convention, \eqref{F-2.3} immediately follows by taking the limit as $\theta\searrow0$ or $\theta\nearrow1$
from the case when $0<\theta<1$.
\end{proof}

Since $\lambda\bigl(A_j^{1/2}B_jA_j^{1/2}\bigr)=s^2\bigl(A_j^{1/2}B_j^{1/2}\bigr)$, note that log-majorization
\eqref{F-2.3} is equivalently written as
\begin{align}\label{F-2.5}
s\bigl(\bigl(A_1^\theta A_2^{1-\theta}\bigr)\bigl(B_1^\theta B_2^{1-\theta}\bigr)\bigr)
\prec_{\log}s^\theta(A_1B_1)s^{1-\theta}(A_2B_2),
\qquad0\le\theta\le1,
\end{align}
with either convention of $A_j^0$ and $0^0$ as stated in Theorem \ref{T-2.1}.

\begin{remark}\label{R-2.2}\rm
When $A_1=B_1=I$, log-majorization \eqref{F-2.3} reduces to
\[
A_2^{\theta/2}B_2^{1-\theta} A_2^{\theta/2}\prec_{\log}\bigl(A_2^{1/2}B_2A_2^{1/2}\bigr)^{1-\theta},
\qquad0\le\theta\le1,
\]
that is Araki's log-majorization in \eqref{F-2.1}. Also, when $A_2=B_1=I$, \eqref{F-2.5} reduces to
\[
s(A_1^\theta B_2^{1-\theta})\prec_{\log}s(A_1^\theta)s(B_2^{1-\theta}),
\]
that is Horn's log-majorization \cite[p.\ 338]{MOA} (also \cite[Corollary 4.3.5]{Hi1}).
\end{remark}

The next corollary is shown from \eqref{F-2.5} similarly to \cite[Corollary 3.4]{Hi2}, while we give the proof
for completeness.

\begin{corollary}\label{C-2.3}
Let $A_j,B_j$ be as in Theorem \ref{T-2.1}, and let $\|\cdot\|$ be a unitarily invariant norm on $\bM_m$. Then
for any $r>0$ and $\theta\in[0,1]$,
\begin{align}\label{F-2.6}
\big\|\,\big|\bigl(A_1^\theta A_2^{1-\theta}\bigr)\bigl(B_1^\theta B_2^{1-\theta}\bigr)\big|^r\big\|
\le\big\|\,|A_1B_1|^r\big\|^\theta\big\|\,|A_2B_2|^r\big\|^{1-\theta},
\end{align}
where we can adopt either convention of $A_j^0$ and $0^0$ as stated in Theorem \ref{T-2.1}.
\end{corollary}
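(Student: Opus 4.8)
The plan is to deduce Corollary \ref{C-2.3} from the log-majorization \eqref{F-2.5} by passing through weak majorization and then using the standard property that a unitarily invariant norm is monotone under weak majorization of the vectors of singular values. Concretely, I would first apply \eqref{F-2.5} and raise both sides to the power $r>0$: since $a\prec_{\log}b$ implies $a^r\prec_{\log}b^r$ (log-majorization is preserved under coordinatewise powers), we get
\[
s\bigl(\bigl(A_1^\theta A_2^{1-\theta}\bigr)\bigl(B_1^\theta B_2^{1-\theta}\bigr)\bigr)^r
\prec_{\log}s^{r\theta}(A_1B_1)\,s^{r(1-\theta)}(A_2B_2).
\]
The left-hand vector is exactly $s\bigl(\bigl|\bigl(A_1^\theta A_2^{1-\theta}\bigr)\bigl(B_1^\theta B_2^{1-\theta}\bigr)\bigr|^r\bigr)$, and the right-hand vector has $i$-th entry $\bigl(s_i(A_1B_1)^r\bigr)^\theta\bigl(s_i(A_2B_2)^r\bigr)^{1-\theta}$, i.e.\ the coordinatewise weighted geometric mean of $s\bigl(|A_1B_1|^r\bigr)$ and $s\bigl(|A_2B_2|^r\bigr)$.

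Next I would invoke the implication ``log-majorization $\Rightarrow$ weak majorization'' (\cite[Proposition 4.4.13]{Hi1}, as recalled in the text) to conclude
\[
s\bigl(\bigl|\bigl(A_1^\theta A_2^{1-\theta}\bigr)\bigl(B_1^\theta B_2^{1-\theta}\bigr)\bigr|^r\bigr)
\prec_w\bigl(s_i(|A_1B_1|^r)^\theta\,s_i(|A_2B_2|^r)^{1-\theta}\bigr)_{i=1}^m,
\]
and therefore, since unitarily invariant norms are monotone under $\prec_w$,
\[
\big\|\,\bigl|\bigl(A_1^\theta A_2^{1-\theta}\bigr)\bigl(B_1^\theta B_2^{1-\theta}\bigr)\bigr|^r\big\|
\le\big\|\bigl(s_i(|A_1B_1|^r)^\theta\,s_i(|A_2B_2|^r)^{1-\theta}\bigr)_i\big\|.
\]
The remaining task is to bound this last norm by $\big\|\,|A_1B_1|^r\big\|^\theta\big\|\,|A_2B_2|^r\big\|^{1-\theta}$.

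For that final bound I would use the Hölder-type inequality for symmetric gauge functions / unitarily invariant norms: if $\Phi$ is the symmetric gauge function associated with $\|\cdot\|$, then for nonnegative vectors $x,y$ one has $\Phi\bigl((x_i^\theta y_i^{1-\theta})_i\bigr)\le\Phi(x)^\theta\,\Phi(y)^{1-\theta}$. This is a standard consequence of Hölder's inequality applied to the dual characterization $\Phi(z)=\max\{\sum_i z_i^\downarrow w_i^\downarrow:\Phi^*(w)\le1\}$ together with the scalar Young/Hölder inequality $u^\theta v^{1-\theta}\le\theta u+(1-\theta)v$; alternatively one cites it directly (it is the form used in \cite[Corollary 3.4]{Hi2}, and also appears in \cite{Bh}). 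Applying it with $x=s(|A_1B_1|^r)$, $y=s(|A_2B_2|^r)$ gives exactly \eqref{F-2.6}. The cases $\theta=0,1$ are immediate (with either convention), so it suffices to treat $0<\theta<1$.

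I do not anticipate a serious obstacle here; this corollary is a routine packaging of \eqref{F-2.5}. The only point requiring a little care is the interchange between ``log-majorization raised to power $r$'' and the identification of $s(|X|^r)$ with $s(X)^r$, plus making sure the weighted-geometric-mean vector is handled by the right Hölder inequality for symmetric gauge functions rather than, say, naively by the triangle inequality. Since the paper explicitly says the proof parallels \cite[Corollary 3.4]{Hi2}, I would keep it short: state the power rule for $\prec_{\log}$, pass to $\prec_w$, apply norm monotonicity, and finish with the Hölder inequality for unitarily invariant norms.
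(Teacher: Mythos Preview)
Your proposal is correct and follows essentially the same route as the paper: pass from \eqref{F-2.5} to a log-majorization of the $r$-th powers of singular values, then to weak majorization and monotonicity of the symmetric gauge function $\Phi$, and finish with the H\"older-type inequality $\Phi\bigl((x_i^\theta y_i^{1-\theta})_i\bigr)\le\Phi(x)^\theta\Phi(y)^{1-\theta}$, which the paper cites as \cite[Theorem IV.1.6]{Bh}. The only cosmetic difference is that the paper works directly with $\Phi$ throughout rather than writing a norm of a vector, and it bundles the ``power $+$ log-maj $\Rightarrow$ weak-maj $\Rightarrow$ $\Phi$-monotone'' step into a single citation of \cite[Lemma 4.4.2]{Hi1}.
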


\begin{proof}
Assume that $0<\theta<1$. Let $\Phi$ be the symmetric gauge function on $\bR^m$ corresponding to the
unitarily invariant norm $\|\cdot\|$ so that $\|X\|=\Phi(s(X))$ for $X\in\bM_m$; see \cite[Theorem IV.2.1]{Bh}.
From \eqref{F-2.5} and \cite[Lemma 4.4.2]{Hi1} we have
\begin{align*}
\big\|\,\big|\bigl(A_1^\theta A_2^{1-\theta}\bigr)\bigl(B_1^\theta B_2^{1-\theta}\bigr)\big|^r\big\|
&=\Phi\bigl(s^r\bigl(\bigl(A_1^\theta A_2^{1-\theta}\bigr)\bigl(B_1^\theta B_2^{1-\theta}\bigr)\bigr)\bigr) \\
&\le\Phi\bigl(s^{\theta r}(A_1B_1)s^{(1-\theta)r}(A_2B_2)\bigr) \\
&\le\Phi^\theta\bigl(s^r(A_1B_1)\bigr)\Phi^{1-\theta}\bigl(s^r(A_2B_2)\bigr) \\
&=\big\|\,|A_1B_1|^r\big\|^\theta\big\|\,|A_2B_2|^r\big\|^{1-\theta},
\end{align*}
where the second inequality above is due to \cite[Theorem IV.1.6]{Bh}.

The case $\theta=0$ or $1$ is treated as in the last part of the proof of Theorem \ref{T-2.1}.
\end{proof}

\begin{remark}\label{R-2.4}\rm
For $A,B\in\bM_m^+$, $r>0$ and $\alpha_j,\beta_j\in[0,\infty)$, $j=1,2$, letting $A_j:=A^{\alpha_j}$ and
$B_j:=B^{\beta_j}$ in \eqref{F-2.6} gives
\[
\big\|\,|A^{\theta\alpha_1+(1-\theta)\alpha_2}B^{\theta\beta_1+(1-\theta)\beta_2}|^r\big\|
\le\big\|\,|A^{\alpha_1}B^{\beta_1}|^r\big\|^\theta
\big\|\,|A^{\alpha_2}B^{\beta_2}|^r\big\|^{1-\theta},\qquad0<\theta<1,
\]
that is, the function $(\alpha,\beta)\mapsto\|\,|A^\alpha B^\beta|^r\|$ is jointly log-convex on $[0,\infty)^2$.
This and similar joint log-convexity results in \cite{Hi2,Sa} are all special cases of Bourin and Lee's
extended version \cite[Theorem 1.2]{BL}. Some related results are also found in \cite{BS}.
\end{remark}

%
%

\section{Application to the $\alpha$-$z$-R\'enyi divergence}\label{Sec-3}

In this section we apply the log-majorization given in Sec.~\ref{Sec-2} to the $\alpha$-$z$-R\'enyi divergence
in quantum information. In this section, for $\sigma\in\bM_m^+$ we write $\sigma^0$ for the support projection
of $\sigma$. Moreover, for $t>0$ we consider $\sigma^{-t}$ as $(\sigma^{-1})^t$, where $\sigma^{-1}$ is the
generalized inverse of $\sigma$, i.e., the inverse with restriction to the support of $\sigma$.

Let $\rho,\sigma\in\bM_m^+$, $\alpha\in[0,\infty)$ and $z\in(0,\infty)$. We define
\[
Q_{\alpha,z}(\rho\|\sigma):=\begin{cases}
\Tr\bigl(\rho^{\alpha\over2z}\sigma^{1-\alpha\over z}\rho^{\alpha\over2z}\bigr)^z
=\Tr\,\big|\rho^{\alpha\over2z}\sigma^{1-\alpha\over2z}\big|^{2z}
& \text{if $\alpha\in[0,1]$ or $\rho^0\le\sigma^0$}, \\
+\infty & \text{if $\alpha>1$ and $\rho^0\not\le\sigma^0$}.
\end{cases}
\]
When $\rho\ne0$ and $\alpha\ne1$, the \emph{$\alpha$-$z$-R\'enyi divergence} (also called the
\emph{$\alpha$-$z$-R\'enyi relative entropy}) is defined by
\[
D_{\alpha,z}(\rho\|\sigma):={1\over\alpha-1}\log{Q_{\alpha,z}(\rho\|\sigma)\over\Tr\,\rho}.
\]
Moreover, the \emph{normalized relative entropy} is defined by
\[
D_{1,z}(\rho\|\sigma):=D_1(\rho\|\sigma):={D(\rho\|\sigma)\over\Tr\rho},
\]
where $D(\rho\|\sigma)$ is the \emph{Umegaki relative entropy} (see, e.g., \cite{MH2} for more details on
the $\alpha$-$z$-R\'enyi and the Umegaki relative entropies).

As shown in \cite[Proposition 1]{LT} and \cite[(II.21)]{MH2} it follows from \eqref{F-2.2} applied to the
trace-norm that $D_{\alpha,z}(\rho\|\sigma)$ is monotone in the parameter $z$ in such a way that
\[
0<z_1\le z_2\implies
\begin{cases}D_{\alpha,z_1}(\rho\|\sigma)\le D_{\alpha,z_2}(\rho\|\sigma), & \alpha\in(0,1),\\
D_{\alpha,z_1}(\rho\|\sigma)\ge D_{\alpha,z_2}(\rho\|\sigma), & \alpha>1.
\end{cases}
\]
Furthermore, from \cite[Theorem 2.1]{Hi} (see also Appendix \ref{Sec-A}) it follows that if
$D_{\alpha,z_1}(\rho\|\sigma)=D_{\alpha,z_2}(\rho\|\sigma)<+\infty$ for some $\alpha\in(0,1)\setminus\{1\}$
and some $z_1,z_2>0$ with $z_1\ne z_2$, then we have $\rho\sigma=\sigma\rho$. The next theorem shows
monotonicity of $D_{\alpha,z}(\rho\|\sigma)$ in the parameter $\alpha$.

\begin{theorem}\label{T-3.1}
Let $\rho,\sigma\in\bM_m^+$ with $\rho\ne0$. Then the following hold:
\begin{itemize}
\item[(i)] Assume that $\rho^0\not\perp\sigma^0$, i.e., $\rho^0$ and $\sigma^0$ are not orthogonal. Then for any
$z>0$ the function $\alpha\mapsto Q_{\alpha,z}(\rho\|\sigma)$ is log-convex on $[0,\infty)$, that is, for every
$\alpha_1,\alpha_2\in[0,\infty)$,
\begin{align}\label{F-3.1}
Q_{\theta\alpha_1+(1-\theta)\alpha_2,z}(\rho\|\sigma)
\le Q_{\alpha_1,z}(\rho\|\sigma)^\theta Q_{\alpha_2,z}(\rho\|\sigma)^{1-\theta},\qquad0<\theta<1,
\end{align}
with the convention that $(+\infty)^\theta:=+\infty$.
\item[(ii)] For any $z>0$ the function $\alpha\mapsto D_{\alpha,z}(\rho\|\sigma)$ is monotone increasing on
$[0,\infty)$. In particular, for every $\alpha\in[0,1)$ and $\alpha'\in(1,\infty)$,
\[
D_{\alpha,z}(\rho\|\sigma)\le D_1(\rho\|\sigma)\le D_{\alpha',z}(\rho\|\sigma).
\]
\end{itemize}
\end{theorem}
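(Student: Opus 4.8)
The plan is to derive part (i) directly from Corollary \ref{C-2.3} (or equivalently the log-majorization \eqref{F-2.5}) applied to the trace-norm, and then obtain part (ii) as a routine consequence of (i). For part (i), I would first reduce to the case $\alpha_1,\alpha_2\in[0,1]$ together with the case where at least one of them is $>1$; in the latter situation the condition $\rho^0\not\le\sigma^0$ forces $Q_{\alpha,z}=+\infty$, so the claimed inequality holds trivially unless $\rho^0\le\sigma^0$, in which case $\sigma^{-1}$ behaves like a genuine inverse on the relevant support and one can work with positive definite matrices by the usual $\rho+\eps I$, $\sigma+\eps I$ limiting argument (the hypothesis $\rho^0\not\perp\sigma^0$ is what guarantees this limit is not vacuous, i.e.\ that $Q_{\alpha,z}\ne 0$ and the $\log$ is well defined). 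Then with $A:=\rho^{1/(2z)}$, $B:=\sigma^{1/(2z)}$ (so $A,B\in\bM_m^+$), choosing exponents so that $A_j := A^{\alpha_j}$, $B_j := B^{1-\alpha_j}$ and $r := 2z$, inequality \eqref{F-2.6} of Corollary \ref{C-2.3} with the trace-norm gives exactly
\[
\Tr\,\bigl|\rho^{\alpha/(2z)}\sigma^{(1-\alpha)/(2z)}\bigr|^{2z}
\le\Bigl(\Tr\,\bigl|\rho^{\alpha_1/(2z)}\sigma^{(1-\alpha_1)/(2z)}\bigr|^{2z}\Bigr)^\theta
\Bigl(\Tr\,\bigl|\rho^{\alpha_2/(2z)}\sigma^{(1-\alpha_2)/(2z)}\bigr|^{2z}\Bigr)^{1-\theta},
\]
where $\alpha=\theta\alpha_1+(1-\theta)\alpha_2$, which is precisely \eqref{F-3.1} in the finite case; the infinite case is handled by the convention $(+\infty)^\theta=+\infty$.

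For part (ii), I would use the standard fact that log-convexity of $Q_{\alpha,z}$ in $\alpha$ is equivalent to convexity of $\alpha\mapsto\log Q_{\alpha,z}(\rho\|\sigma)$, and that $D_{\alpha,z}(\rho\|\sigma)=\frac{1}{\alpha-1}\bigl(\log Q_{\alpha,z}(\rho\|\sigma)-\log\Tr\rho\bigr)$ is the difference quotient of this convex function between the point $\alpha$ and the point $1$ (note $Q_{1,z}(\rho\|\sigma)=\Tr\rho$ when $\rho^0\le\sigma^0$, and the Umegaki entropy is the derivative at $\alpha=1$). A convex function on $[0,\infty)$ has difference quotients $\frac{f(\alpha)-f(1)}{\alpha-1}$ that are monotone increasing in $\alpha$, which yields exactly the asserted monotonicity of $D_{\alpha,z}$ on $[0,\infty)$, together with the sandwich $D_{\alpha,z}\le D_1\le D_{\alpha',z}$ for $\alpha<1<\alpha'$ by letting $\alpha\to1$ from both sides (using continuity / the known limit $\lim_{\alpha\to1}D_{\alpha,z}=D_1$, which follows from differentiability of $\log Q_{\alpha,z}$ at $\alpha=1$ or can be quoted from \cite{MH2}). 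One should treat the case $\rho^0\not\le\sigma^0$ separately: then $D_{\alpha',z}=+\infty$ for all $\alpha'>1$, so that half of the inequality is trivial, and the monotonicity on $[0,1]$ still follows from the convexity of $\log Q_{\alpha,z}$ restricted to $[0,1]$.

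The main obstacle, I expect, is the bookkeeping around supports, generalized inverses, and the degenerate cases rather than any substantial new idea: one must check carefully that the substitution $B_j=\sigma^{1-\alpha_j}$ in Corollary \ref{C-2.3} is legitimate when $1-\alpha_j<0$ (this is where the convention $\sigma^{-t}=(\sigma^{-1})^t$ with the generalized inverse, and the restriction to $\rho^0\le\sigma^0$, must be invoked so that the antisymmetric-tensor/limiting arguments underlying Corollary \ref{C-2.3} still apply), and that passing to the $\eps\to0$ limit preserves the inequality and does not produce $0$ or $\infty$ ambiguities—this is exactly the role played by the hypothesis $\rho^0\not\perp\sigma^0$ in (i). A secondary point is justifying the limit $\alpha\to1$ needed to insert $D_1$ into the chain in (ii); this is where I would either cite the known continuity of $D_{\alpha,z}$ at $\alpha=1$ or give the short argument via the derivative of $\log Q_{\alpha,z}$ at $\alpha=1$.
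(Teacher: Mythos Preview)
Your proposal is correct and matches the paper's proof almost exactly: for (i) the paper applies Corollary~\ref{C-2.3} with the trace-norm and the same choices $A_j=\rho^{\alpha_j/2z}$, $B_j=\sigma^{(1-\alpha_j)/2z}$, $r=2z$, splitting into the cases $\rho^0\le\sigma^0$ and $\rho^0\not\le\sigma^0$ just as you describe; for (ii) it uses the same difference-quotient argument from log-convexity together with $\lim_{\alpha\to1}Q_{\alpha,z}=\Tr\rho$ (when $\rho^0\le\sigma^0$) and quotes \cite{MH2} for $\lim_{\alpha\to1}D_{\alpha,z}=D_1$. The only cosmetic difference is that the paper does not re-invoke an $\eps$-perturbation here, since that limiting step is already absorbed into Theorem~\ref{T-2.1}/Corollary~\ref{C-2.3}.
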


\begin{proof}
(i)\enspace
By the assumption $\rho^0\not\perp\sigma^0$, note that $0<Q_{\alpha,z}(\rho\|\sigma)\le+\infty$ for all
$\alpha\ge0$, $z>0$. First, assume that $\rho^0\le\sigma^0$. Then for arbitrary $z>0$ and
$\alpha_j\in[0,\infty)$, $j=1,2$, let us apply \eqref{F-2.6} to $A_j:=\rho^{\alpha_j/2z}$,
$B_j:=\sigma^{(1-\alpha_j)/2z}$ ($=(\sigma^{-1})^{(\alpha_j-1)/2z}$ if $\alpha_j>1$), $r=2z$ and the trace-norm.
For any $\theta\in(0,1)$, since (whichever $\alpha_j<1$ or $\alpha_j=1$ or $\alpha_j>1$)
\[
Q_{\theta\alpha_1+(1-\theta)\alpha_2,z}(\rho\|\sigma)
=\Tr\,\Big|\Bigl(\bigl(\rho^{\alpha_1\over2z}\bigr)^\theta
\bigl(\rho^{\alpha_2\over2z}\bigr)^{1-\theta}\Bigr)
\Bigl(\bigl(\sigma^{1-\alpha_1\over2z}\bigr)^\theta
\bigl(\sigma^{1-\alpha_2\over2z}\bigr)^{1-\theta}\Bigr)\Big|^{2z}
\]
and
\[
Q_{\alpha_j,z}(\rho\|\sigma)
=\Tr\,\Big|\rho^{\alpha_j\over2z}\sigma^{1-\alpha_j\over2z}\Big|^{2z},\qquad j=1,2,
\]
we have inequality \eqref{F-3.1} from \eqref{F-2.6}. Next, assume that $\rho^0\not\le\sigma^0$. Then the
above argument remains valid for $\alpha_j$ restricted to $[0,1]$. Since $Q_{\alpha,z}(\rho\|\sigma)=+\infty$
for all $\alpha>1$ and $z>0$ by definition, the RHS of \eqref{F-3.1} is $+\infty$ unless $\alpha_1,\alpha_2<1$.
Hence we have the result.

(ii)\enspace
First, assume that $\rho^0\le\sigma^0$; then $0<Q_{\alpha,z}(\rho\|\sigma)<\infty$ for all $\alpha\ge0$, $z>0$.
Since $\alpha\mapsto Q_{\alpha,z}(\rho\|\sigma)$ is continuous on $[0,\infty)$ and
\begin{align}\label{F-3.2}
\lim_{\alpha\to1}Q_{\alpha,z}(\rho\|\sigma)=\Tr(\rho^{1/2z}\sigma^0\rho^{1/2z})^z=\Tr\,\rho,
\end{align}
it follows from (i) that
\[
D_{\alpha,z}(\rho\|\sigma)={\log Q_{\alpha,z}(\rho\|\sigma)-\log\Tr\,\rho\over\alpha-1}
\]
is increasing in $\alpha\in[0,\infty)\setminus\{1\}$, that is, $D_{\alpha,z}(\rho\|\sigma)\le D_{\alpha',z}(\rho\|\sigma)$
for all $\alpha,\alpha'\in[0,\infty)\setminus\{1\}$ with $\alpha<\alpha'$. Next, assume that $\rho^0\not\le\sigma^0$.
Here we may assume that $\rho^0\not\perp\sigma^0$, since otherwise $D_{\alpha,z}(\rho\|\sigma)=+\infty$ for all
$\alpha\ge0$, $z>0$. Then the above argument can still work whenever $\alpha\in[0,1)$, where \eqref{F-3.2}
is though replaced with $\lim_{\alpha\nearrow1}Q_{\alpha,z}(\rho\|\sigma)<\Tr\,\rho$. Since
$D_{\alpha,z}(\rho\|\sigma)=+\infty$ for all $\alpha\ge1$, $D_{\alpha,z}(\rho\|\sigma)$ is increasing in
$\alpha\in[0,\infty)\setminus\{1\}$ in this case too. Furthermore, recall that
$\lim_{\alpha\to1}D_{\alpha,z}(\rho\|\sigma)=D_1(\rho\|\sigma)$; see \cite[Proposition III.36]{MH2} for a
stronger result. Therefore, the assertion follows.
\end{proof}

\begin{remark}\label{R-3.2}\rm
Special cases of $D_{\alpha,z}(\rho\|\sigma)$ are the \emph{Petz type $\alpha$-R\'enyi divergence}
$D_\alpha(\rho\|\sigma)=D_{\alpha,1}(\rho\|\sigma)$ \cite{Pe} and the
\emph{sandwiched $\alpha$-R\'enyi divergence} $D_\alpha^*(\rho\|\sigma)=D_{\alpha,\alpha}(\rho\|\sigma)$
\cite{MDSFT,WWY}. It is known \cite[Lemma II.2]{MH1} (also \cite[Proposition 5.3(4)]{Hi3}) that
$\alpha\mapsto D_\alpha$ is monotone increasing on $[0,\infty)$, which is included in Theorem \ref{T-3.1}.
Although the same is known for $D_\alpha^*$ in \cite[Theorem 7]{MDSFT} (also \cite[Lemma 8]{BST}), it is
not included in Theorem \ref{T-3.1}. In this connection, it might be a natural problem to extend the theorem in
such a way that $\alpha\mapsto D_{\alpha,z(\alpha)}(\rho\|\sigma)$ is monotone increasing when
$z(\alpha):=\kappa\alpha+z_0$ with $\kappa,z_0\ge0$. Indeed, if $0<\alpha<1$ and $\alpha\le\alpha'$, then
we have
\[
D_{\alpha,z(\alpha)}(\rho\|\sigma)\le D_{\alpha,z(\alpha')}(\rho\|\sigma)
\le D_{\alpha',z(\alpha')}(\rho\|\sigma),
\]
where the first inequality is seen from \cite[Proposition 1]{LT} and \cite[(II.21)]{MH2} and the second is due to
Theorem \ref{T-3.1}. However, the problem seems difficult when $\alpha>1$. For instance, the monotone
increasing of $\alpha\mapsto D_{\alpha,\alpha-1}(\rho\|\sigma)$ on $(1,\infty)$ and that of
$\alpha\mapsto D_{\alpha,1-\alpha}(\rho\|\sigma)$ on $(0,1)$ were shown in \cite[Lemma 25]{RTT} by
computing the derivative, even though the slope of the line $z(\alpha)=1-\alpha$ is negative. Furthermore, note
that the regularized measured R\'enyi divergence for $\alpha\in(0,1/2)$ is equal to
$D_{\alpha,1-\alpha}(\rho\|\sigma)$ (see \cite[Example III.35]{MBV}, \cite[(II.17)]{MH2}), and the monotone
increasing of $D_{\alpha,1-\alpha}(\rho\|\sigma)$ in $\alpha\in(0,1/2)$ is also seen. In this way, the general
situation of the monotonicity of $D_{\alpha,z}$ in two parameters $\alpha,z$ seems quite complicated.
\end{remark}

\section{Multivariate Araki's log-majorization}\label{Sec-4}

By use of Hirschman's strengthening  of Hadamard's three lines theorem, the following inequality (specialized
here to the operator norm) was proved in \cite{SBT} (see also \cite[Appendix]{HKT}): for every
$A_1,\dots,A_n\in\bM_m^+$ and any $\theta\in(0,1]$,
\[
\log\Bigg\|\,\Bigg|\prod_{j=1}^nA_j^\theta\Bigg|^{1/\theta}\Bigg\|_\infty
\le\int_{-\infty}^\infty\log\Bigg\|\prod_{j=1}^nA_j^{1+it}\Bigg\|_\infty\,d\beta_\theta(t),
\]
where
\[
d\beta_\theta(t):={\sin(\pi\theta)\over2\theta(\cos(\pi t)+\cos(\pi\theta))}\,dt,
\]
and the functional calculus $A_j^z$ for any $z\in\bC$ is defined with the convention that $0^z=0$. By extending
the antisymmetric tensor power technique to the setting with a logarithmic integral average, in \cite[(112)]{HKT}
we generalized the above inequality to the form of log-majorization as follows: for every $A_j,\theta$ as above,
\[
\log\lambda\Biggl(\Bigg|\prod_{j=1}^nA_j^\theta\Bigg|^{1/\theta}\Biggr)
\prec\int_{-\infty}^\infty\log\lambda\Bigg(\Bigg|\prod_{j=1}^nA_j^{1+it}\Bigg|\Biggr)\,d\beta_\theta(t),
\]
where we use the convention $\log0:=-\infty$ and majorization $\prec$ makes sense even for vectors having
entries $-\infty$. This may be compared with \eqref{F-2.1} and called the \emph{multivariate Araki's log-majorization}.
In particular, by letting $\theta\searrow0$, this yields, for every Hermitian $H_j\in\bM_m$ and any $r>0$, the
multivariate Golden--Thompson inequality \cite[Corollary 18]{HKT}
\[
\Tr\,\exp\Biggl(r\sum_{j=1}^nH_j\Biggr)
\le\exp\int_{-\infty}^\infty\log\Tr\,\Bigg|\prod_{j=1}^ne^{(1+it)H_j}\Bigg|^r\,d\beta_0(t)
\]
and hence
\begin{align}\label{F-4.1}
\Tr\,\exp\Biggl(r\sum_{j=1}^nH_j\Biggr)
\le\int_{-\infty}^\infty\Tr\,\Bigg|\prod_{j=1}^ne^{(1+it)H_j}\Bigg|^r\,d\beta_0(t).
\end{align}
In fact, when $H_j$'s are replaced with $H_j/2$, inequality \eqref{F-4.1} for $n=2$ and $r=2$ reduces to the
classical Golden--Thompson inequality
\begin{align}\label{F-4.2}
\Tr\,e^{H_1+H_2}\le\Tr\,e^{H_1}e^{H_2}.
\end{align}
For $n=3$ and $r=2$, \eqref{F-4.1} reduces to
\[
\Tr\,e^{H_1+H_2+H_3}\le\int_{-\infty}^\infty\Tr\,e^{H_1}e^{(1+it)H_2/2}e^{H_3}e^{(1-it)H_2/2}\,d\beta_0(t),
\]
which was first shown by Lieb \cite[Theorem 7]{Li} in view of \cite[Lemma 3.4]{SBT}.

From \cite[Theorem 2.1]{Hi} (see also Appendix \ref{Sec-A}) we know that equality holds in the
Golden--Thompson inequality in \eqref{F-4.2} if and only if $H_1H_2=H_2H_1$. Then it might be natural to think
of a similar characterization for the equality case of the multivariate Golden--Thompson inequality in \eqref{F-4.1}.
It is of course obvious that equality holds in \eqref{F-4.1} if $H_jH_k=H_kH_j$ for all $j,k$. However, the converse
direction seems a complicated problem as the next example suggests. In practice, it seems difficult to even
conjecture an exact form of necessary and sufficient condition for the equality case of \eqref{F-4.1}.

\begin{example}\label{E-4.1}\rm
Let us choose non-commuting Hermitian matrices $H,K$, and define
\[
H_1:=H\oplus H,\qquad H_2:=(-H)\oplus(-K),\qquad H_3:=K\oplus K.
\]
Then it is easy to check the following:
\begin{itemize}
\item any pair from $H_1,H_2,H_3$ is non-commuting,
\item any $H_j$ is non-commuting with $H_k+H_l$ for any $k\ne l$,
\item any $H_j$ is non-commuting with $H_1+H_2+H_3$.
\end{itemize}
These mean that there is no non-trivial commutativity relation conceivable for a triplet $(H_1,H_2,H_3)$.
Nevertheless, we compute
\begin{align*}
&\Tr\,e^{H_1+H_2+H_3}=\Tr\,e^{K\oplus H}=\Tr(e^K\oplus e^H)=\Tr\,e^K+\Tr\,e^H, \\
&\Tr\,e^{H_1}e^{H_2}e^{H_3}=\Tr(e^He^{-H}e^K\oplus e^He^{-K}e^K)=\Tr\,e^K+\Tr\,e^H,
\end{align*}
so that $\Tr\,e^{H_1+H_2+H_3}=\Tr\,e^{H_1}e^{H_2}e^{H_3}$ holds. Furthermore, for any $r>0$ we compute
\begin{align*}
&\int_{-\infty}^\infty\Tr\,\big|e^{(1+it)H_1}e^{(1+it)H_2}e^{(1+it)H_3}\big|^r\,d\beta_0(t) \\
&\quad=\int_{-\infty}^\infty\Tr\bigl(e^{H_3}e^{(1-it)H_2}e^{2H_1}
e^{(1+it)H_2}e^{H_3}\bigr)^{r/2}\,d\beta_0(t) \\
&\quad=\int_{-\infty}^\infty\Tr\Bigl(\bigl(e^Ke^{-(1-it)H}e^{2H}e^{-(1+it)H}e^K)
\oplus(e^Ke^{-(1-it)K}e^{2H}e^{-(1+it)K}e^K)\Bigr)^{r/2}\,d\beta_0(t) \\
&\quad=\int_{-\infty}^\infty\Tr\bigl(e^{2K}\oplus e^{itK}e^{2H}e^{-itK}\bigr)^{r/2}\,d\beta_0(t)
=\int_{-\infty}^\infty\Tr\bigl(e^{rK}\oplus e^{itK}e^{rH}e^{-itK}\bigr)\,d\beta_0(t) \\
&\quad=\Tr\bigl(e^{rK}\oplus e^{rH}\bigr)=\Tr\bigl(e^K\oplus e^H\bigr)^r
=\Tr\,e^{r(H_1+H_2+H_3)},
\end{align*}
so that equality holds in \eqref{F-4.1} for all $r>0$.
\end{example}

\section{Log-majorization for the Karcher mean}\label{Sec-5}

In this and the next sections, for convenience, we write $\bH_m$ and $\bP_m$ for the sets of $m\times m$
Hermitian matrices and of $m\times m$ positive definite matrices, respectively. For $\alpha\in[0,1]$ and
$A,B\in\bP_m$ the two-variable \emph{weighted geometric mean} $A\#_\alpha B$ is given as
\[
A\#_\alpha B=A^{1/2}(A^{-1/2}BA^{-1/2})^\alpha A^{1/2},
\]
which is the most studied Kubo--Ando's operator mean \cite{KA}. Let $\omega=(w_1,\dots,w_m)$ be a
weight (probability) vector. The multivariable extension of the geometric mean was developed by several
authors in \cite{BH,Mo} and \cite{LL1,LL2} in the Riemannian geometry approach. For $A_1,\dots,A_n\in\bP_m$
the \emph{Riemannian geometric mean}, called more often the \emph{Karcher mean}, is defined as a unique
minimizer
\[
G_\omega(A_1,\dots,A_n)=\underset{X\in\bP_m}{\mathrm{arg\,min}}
\sum_{j=1}^nw_j\delta^2(X,A_j).
\]
Here, $\delta(A,B)$ denotes the geodesic distance with respect to the Riemannian trace metric, which is
explicitly given as (see \cite{BH,Mo})
\[
\delta(A,B)=\Biggl(\sum_{i=1}^m\log^2\lambda_i(A^{-1}B)\Biggr)^{1/2},\qquad A,B\in\bP_m.
\]
By taking the gradient zero equation for the map $X\in\bP_m\mapsto\delta(X,A)$ as in \cite{Kar}, it was
shown in \cite{BH,Mo} that $G_\omega(A_1,\dots,A_n)$ is a unique solution $X\in\bP_m$ of
\begin{align}\label{F-5.1}
\sum_{j=1}^nw_j\log X^{1/2}A_j^{-1}X^{1/2}=0,
\end{align}
which is called the \emph{Karcher equation} after \cite{Kar}. Note that
$G_{(1-\alpha,\alpha)}(A,B)=A\#_\alpha B$ for $A,B\in\bP_m$. See, e.g., \cite{LL1} for a list of basic properties
of the Karcher mean.

For any $\alpha\in[0,1]$ and $A,B\in\bP_m$ the log-majorization
\begin{align}\label{F-5.2}
A^p\#_\alpha B^p\prec_{\log}(A\#_\alpha B)^p,\qquad p\ge1,
\end{align}
or equivalently,
\[
(A^p\#_\alpha B^p)^{1/p}\prec_{\log}(A^q\#_\alpha B^q)^{1/q},\qquad0<q\le p,
\]
was proved in \cite{AH}. The main part of the proof of \eqref{F-5.2} in \cite{AH} is to prove the operator
norm inequality (sometimes called the \emph{Ando--Hiai inequality})
$\|A^p\#_\alpha B^p\|_\infty\le\|A\#_\alpha B\|_\infty^p$ for all $p\ge1$, or equivalently,
\begin{align}\label{F-5.3}
A\#_\alpha B\le I \implies A^p\#_\alpha B^p\le I\ \ \mbox{for all $p\ge1$}.
\end{align}
Once this is proved, \eqref{F-5.2} is immediately shown by the antisymmetric tensor power technique.

In \cite[Theorem 3]{Ya} Yamazaki proved the multivariate version of \eqref{F-5.3} as follows:
\begin{align}\label{F-5.4}
G_\omega(A_1,\dots,A_n)\le I \implies G_\omega(A_1^p,\dots,A_n^p)\le I\ \ \mbox{for all $p\ge1$}.
\end{align}
Moreover, we note that the Karcher mean $G_\omega$ is well behaved under the antisymmetric tensor
power procedure in such a way that for every $A_1,\dots,A_n\in\bP_m$,
\begin{align}\label{F-5.5}
G_\omega(A_1,\dots,A_n)^{\wedge k}=G_\omega(A_1^{\wedge k},\dots,A_n^{\wedge k}),
\qquad1\le k\le m,
\end{align}
as observed in \cite[Theorem 4.4]{BK} in the uniform weight case, but the proof is valid for any $\omega$
as well. From \eqref{F-5.4} and \eqref{F-5.5} the same argument as in \cite{AH} (also in the proof of
Theorem \ref{T-2.1}) shows the log-majorization for $G_\omega$, as stated in \cite[Remark 3.8]{HP}, which
we state as a theorem here for later use.

\begin{theorem}\label{T-5.1}
For every $A_1,\dots,A_n\in\bP_m$ we have
\begin{align}\label{F-5.6}
G_\omega(A_1^p,\dots,A_n^p)^{1/p}\prec_{\log}G_\omega(A_1^q,\dots,A_n^q)^{1/q},
\qquad0<q\le p,
\end{align}
and
\begin{align}\label{F-5.7}
G_\omega(A_1^p,\dots,A_n^p)^{1/p}\prec_{\log}LE_\omega(A_1,\dots,A_n),\qquad p>0,
\end{align}
where
\[
LE_\omega(A_1,\dots,A_n):=\exp\Biggl(\sum_{j=1}^nw_j\log A_j\Biggr)
\]
is the so-called \emph{Log-Euclidean mean} of $A_1,\dots,A_n$.
\end{theorem}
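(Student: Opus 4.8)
The plan is to mimic the two-variable argument of \cite{AH} exactly, using Yamazaki's implication \eqref{F-5.4} and the antisymmetric-tensor compatibility \eqref{F-5.5} as the two structural inputs. The only genuine content is \eqref{F-5.6}; the majorization \eqref{F-5.7} will then drop out by a limiting argument. So I would first reduce \eqref{F-5.6} to the special case where $q=1$ and $p\ge1$, i.e.\ to
\begin{align*}
G_\omega(A_1^p,\dots,A_n^p)^{1/p}\prec_{\log}G_\omega(A_1,\dots,A_n).
\end{align*}
Indeed, for general $0<q\le p$, write $p=qr$ with $r\ge1$, put $B_j:=A_j^q$, and apply the $q=1$ case to the $B_j$ with exponent $r$: this gives $G_\omega(B_1^r,\dots,B_n^r)^{1/r}\prec_{\log}G_\omega(B_1,\dots,B_n)$, and raising both sides to the power $1/q$ (which preserves log-majorization, since $a\prec_{\log}b$ iff $a^s\prec_{\log}b^s$ for any $s>0$) yields \eqref{F-5.6}.

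Next, for the reduced statement I would run the standard normalization-plus-antisymmetric-tensor scheme. Fix $p\ge1$ and set $G:=G_\omega(A_1,\dots,A_n)$. Replacing $A_j$ by $G^{-1/2}A_jG^{-1/2}$ uses the congruence-invariance of the Karcher mean, $G_\omega(C^*A_1C,\dots,C^*A_nC)=C^*G_\omega(A_1,\dots,A_n)C$ for invertible $C$ (a basic property listed in \cite{LL1}), to reduce to the case $G_\omega(A_1,\dots,A_n)=I$; here I should note that this congruence does not commute with taking $p$-th powers, which is precisely why the statement is a nontrivial inequality rather than an identity, and why one must invoke \eqref{F-5.4}. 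With $G_\omega(A_1,\dots,A_n)=I$, Yamazaki's result \eqref{F-5.4} gives $G_\omega(A_1^p,\dots,A_n^p)\le I$, i.e.\ $\lambda_1\bigl(G_\omega(A_1^p,\dots,A_n^p)\bigr)\le 1$. To upgrade this operator-norm bound to the full product-of-eigenvalues inequality, apply it to the $k$-fold antisymmetric tensor powers $A_j^{\wedge k}$: by \eqref{F-5.5} we have $G_\omega(A_1^{\wedge k},\dots,A_n^{\wedge k})=G_\omega(A_1,\dots,A_n)^{\wedge k}=I^{\wedge k}=I$, and $(A_j^{\wedge k})^p=(A_j^p)^{\wedge k}$, so the normalized Yamazaki inequality at level $k$ reads $G_\omega\bigl((A_1^p)^{\wedge k},\dots,(A_n^p)^{\wedge k}\bigr)\le I$, which by \eqref{F-5.5} again is $\bigl(G_\omega(A_1^p,\dots,A_n^p)\bigr)^{\wedge k}\le I$; since $\lambda_1(X^{\wedge k})=\prod_{i=1}^k\lambda_i(X)$ for $X\in\bP_m$, this says exactly $\prod_{i=1}^k\lambda_i\bigl(G_\omega(A_1^p,\dots,A_n^p)\bigr)\le 1$ for $k=1,\dots,m$. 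Unwinding the normalization (multiplying back by the appropriate powers of eigenvalues of $G$, equivalently passing through the general-position statement $G_\omega(A_1^p,\dots,A_n^p)\prec_{\log}G_\omega(A_1,\dots,A_n)^p$ and taking $1/p$-th powers) gives the $q=1$ case, hence \eqref{F-5.6}. Note the determinant (i.e.\ the $k=m$) equality is built in, since $\det G_\omega(A_1,\dots,A_n)=\bigl(\prod_j(\det A_j)^{w_j}\bigr)$ is multiplicative in the $A_j$ in the right way, so the normalization forces $\det G_\omega(A_1^p,\dots,A_n^p)=1$ exactly.

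Finally, for \eqref{F-5.7} I would let $q\searrow0$ in \eqref{F-5.6}. The map $q\mapsto G_\omega(A_1^q,\dots,A_n^q)^{1/q}$ is decreasing in the log-majorization order as $q$ increases (that is what \eqref{F-5.6} says), so it suffices to identify its limit as $q\searrow0$ and to check that log-majorization is preserved under this limit. Here $A_j^q=\exp(q\log A_j)=I+q\log A_j+O(q^2)$, and by continuity and the defining Karcher equation \eqref{F-5.1} one gets $G_\omega(A_1^q,\dots,A_n^q)=I+q\sum_j w_j\log A_j+o(q)$, whence $G_\omega(A_1^q,\dots,A_n^q)^{1/q}\to\exp\bigl(\sum_j w_j\log A_j\bigr)=LE_\omega(A_1,\dots,A_n)$ as $q\searrow0$; since eigenvalues depend continuously on the matrix, the finitely many eigenvalue-product inequalities and the determinant equality pass to the limit, giving \eqref{F-5.7} for every $p>0$. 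I expect the first-order expansion of the Karcher mean near the identity to be the only slightly delicate point — it is the analytic analogue of the Lie--Trotter formula and follows from differentiating the Karcher equation, but it should be quotable from \cite{LL1} or proved in a line from the implicit function theorem applied to \eqref{F-5.1}. Everything else is the by-now-routine antisymmetric-tensor bookkeeping, with the genuine inequality content entirely imported from \eqref{F-5.4}.
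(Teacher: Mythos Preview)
Your overall strategy matches the paper's: derive \eqref{F-5.6} from Yamazaki's implication \eqref{F-5.4} via the antisymmetric tensor technique using \eqref{F-5.5}, and then obtain \eqref{F-5.7} by letting $q\searrow0$ through the Lie--Trotter formula \eqref{F-5.8}. However, there is a genuine gap in your normalization step. You propose to replace $A_j$ by $\tilde A_j:=G^{-1/2}A_jG^{-1/2}$ (congruence normalization) so that $G_\omega(\tilde A_1,\dots,\tilde A_n)=I$, and then apply \eqref{F-5.4} and tensors. But the statement $G_\omega(A_1^p,\dots,A_n^p)\prec_{\log}G_\omega(A_1,\dots,A_n)^p$ is \emph{not} invariant under congruence, precisely because (as you yourself note) $(G^{-1/2}A_jG^{-1/2})^p\ne G^{-p/2}A_j^pG^{-p/2}$. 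Your argument therefore only establishes the conclusion for the particular tuple $(\tilde A_j)$, and there is no way to ``unwind'' back to the original $(A_j)$: the matrix $G_\omega(\tilde A_1^p,\dots,\tilde A_n^p)$ bears no usable relation to $G_\omega(A_1^p,\dots,A_n^p)$. Your phrase ``multiplying back by the appropriate powers of eigenvalues of $G$'' describes a scalar unwinding, not a congruence one, so the argument as written is internally inconsistent.

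The fix --- which is exactly what the paper does, following \cite{AH} and the proof of Theorem~\ref{T-2.1} --- is to use \emph{scalar} normalization instead. By homogeneity $G_\omega(cA_1,\dots,cA_n)=c\,G_\omega(A_1,\dots,A_n)$, one may divide each $A_j$ by $\|G_\omega(A_1,\dots,A_n)\|_\infty$ to reduce the operator-norm inequality $\|G_\omega(A_1^p,\dots,A_n^p)\|_\infty\le\|G_\omega(A_1,\dots,A_n)\|_\infty^p$ to the case $G_\omega(A_1,\dots,A_n)\le I$, where \eqref{F-5.4} applies directly. Since scalar scaling \emph{does} commute with $p$-th powers, this reduction is legitimate and unwinds cleanly. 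One then applies this operator-norm inequality (now established for all tuples) to the antisymmetric tensor powers $A_j^{\wedge k}$, invoking \eqref{F-5.5}, to obtain the remaining eigenvalue-product inequalities. Your treatment of the reduction to $q=1$, of the determinant equality, and of the limit \eqref{F-5.7} via the Lie--Trotter formula is correct and agrees with the paper.
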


Log-majorization \eqref{F-5.7} follows from \eqref{F-5.6} and the Lie--Trotter formula
\begin{align}\label{F-5.8}
\lim_{q\to0}G_\omega(A_1^q,\dots,A_n^q)^{1/q}=LE_\omega(A_1,\dots,A_n).
\end{align}
Note that \eqref{F-5.8} is verified as in the proof of \cite[Lemma 3.1]{FFS}. (In \cite{HL} Theorem \ref{T-5.1} and
\eqref{F-5.8} were proved in a more general setting of probability measures.)

Before stating the main theorem of this section, we prepare a technical lemma on power series expansions
related to the Karcher equation. For given $H_1,\dots,H_n\in\bH_m$ and a weight vector
$\omega=(w_1,\dots,w_n)$ with $w_j>0$, we write for each $t\in\bR$,
\begin{align}
X(t)&:=G_\omega(e^{tH_1},\dots,e^{tH_n}), \label{F-5.9}\\
Y(t)&:=X(t)^{1/2}, \nonumber\\
Z_j(t)&:=Y(t)e^{-tH_j}Y(t),\qquad1\le j\le n. \nonumber
\end{align}
Then the Karcher equation \eqref{F-5.1} gives
\begin{align}\label{F-5.10}
\sum_{j=1}^nw_j\log Z_j(t)=\sum_{j=1}^nw_j\log X(t)^{1/2}e^{-tH_j}X(t)^{1/2}=0,\qquad t\in\bR.
\end{align}
It is obvious that $X(0)=Y(0)=Z_j(0)=I$. Here recall that the map
$(A_1,\dots,A_n)\mapsto G_\omega(A_1,\dots,A_n)$ is real analytic in a neighborhood of $(I,\dots,I)$ in
$(\bP_m)^n$, as Lawson stated in \cite[Theorem 7.2]{Law} (in the general $C^*$-algebra setting). Its proof was
indeed given as part of the proof of \cite[Theorem 6.3]{LL2}. From this fact it follows that the matrix function
$X(t)$ is real analytic in a neighborhood of $t=0$, and hence $Y(t)$ and $Z_j(t)$, $1\le i\le m$, are real analytic
near $t=0$ as well. Consequently, we can expand $X(t)$, $Y(t)$ and $Z_j(t)$ in power series, norm convergent
near $t=0$, as follows:
\begin{align}
X(t)&=I+tX_1+t^2X_2+\cdots, \label{F-5.11}\\
Y(t)&=I+tY_1+t^2Y_2+\cdots, \nonumber\\
Z_j(t)&=I+tZ_{1,j}+t^2Z_{2,j}+\cdots,\qquad1\le j\le n, \nonumber
\end{align}
where $X_k$, $Y_k$ and $Z_{k,j}$, $k\in\bN$, are matrices in $\bH_m$. Furthermore, we write
\begin{align}\label{F-5.12}
H^{(k)}:=\sum_{j=1}^nw_jH_j^k,\qquad
Z^{(k)}:=\sum_{j=1}^nw_jZ_{k,j},\qquad k\in\bN.
\end{align}

Below we make some computations based on the above power series of $X(t)$, $Y(t)$ and $Z_j(t)$. All power
series appearing are convergent near $t=0$. At first, since $X(t)=Y(t)^2$, we have
\begin{align}\label{F-5.13}
X_k=\sum_{l=0}^kY_lY_{k-l},\qquad k\in\bN.
\end{align}
Since
\begin{align*}
\log Z_j(t)&=\log\Biggl(I+\sum_{k=1}^\infty t^kZ_{k,j}\Biggr)
=\sum_{r=1}^\infty{(-1)^{r-1}\over r}\Biggl(\sum_{k=1}^\infty t^kZ_{k,j}\Biggr)^r \\
&=\sum_{k=1}^\infty t^k\left(\sum_{r=1}^k{(-1)^{r-1}\over r}\left(
\sum_{k_1,\dots,k_r\ge1\atop{k_1+\cdots+k_r=k}}Z_{k_1,j}\cdots Z_{k_r,j}\right)\right),
\end{align*}
the Karcher equation \eqref{F-5.10} gives
\[
\sum_{r=1}^k{(-1)^{r-1}\over r}\left(\sum_{j=1}^nw_j\left(
\sum_{k_1,\dots,k_r\ge1\atop{k_1+\cdots+k_r=k}}Z_{k_1,j}\cdots Z_{k_r,j}\right)\right)=0,
\qquad k\in\bN,
\]
which implies that
\begin{align}\label{F-5.14}
Z^{(k)}=\begin{cases}\ 0, & k=1, \\
\displaystyle \sum_{r=2}^k{(-1)^r\over r}\left(\sum_{j=1}^nw_j
\left(\sum_{k_1,\dots,k_r\ge1\atop{k_1+\cdots+k_r=k}}Z_{k_1,j}\cdots Z_{k_r,j}\right)\right),
& k\ge2.\end{cases}
\end{align}
Since
\[
Z_j(t)=\Biggl(\sum_{r=0}^\infty t^rY_r\Biggr)\Biggl(\sum_{l=0}^\infty{(-1)^l\over l!}t^lH_j^l\Biggr)
\Biggl(\sum_{s=0}^\infty t^sY_s\Biggr)
\quad(\mbox{where $Y_0=H_j^0=I$}),
\]
we have
\begin{align}
Z_{k,j}&=\sum_{l=0}^k{(-1)^l\over l!}\left(\sum_{r=0}^{k-l}Y_rH_j^lY_{k-l-r}\right) \nonumber\\
&=\begin{cases}2Y_1-H_j, & k=1, \\
\displaystyle 2Y_k+\sum_{r=1}^{k-1}Y_rY_{k-r}+\sum_{l=1}^k{(-1)^l\over l!}
\left(\sum_{r=0}^{k-l}Y_rH_j^lY_{k-l-r}\right), & k\ge2.\end{cases}\label{F-5.15}
\end{align}
This with \eqref{F-5.14} (for $k=1$) gives
\begin{align}\label{F-5.16}
2Y_k=\begin{cases}H^{(1)}, & k=1, \\
\displaystyle Z^{(k)}-\sum_{r=1}^{k-1}Y_rY_{k-r}-\sum_{l=1}^k{(-1)^l\over l!}
\left(\sum_{r=0}^{k-l}Y_rH^{(l)}Y_{k-l-r}\right), & k\ge2.\end{cases}
\end{align}

A crucial point in the above expressions in \eqref{F-5.14}--\eqref{F-5.16} is that they together provide
a recursive way to compute the sequence $\{Y_k\}_{k=1}^\infty$ and hence the sequence
$\{X_k\}_{k=1}^\infty$ by \eqref{F-5.13}. Indeed, $Z^{(1)}$, $Y_1$ and $Z_{1,j}$ are fixed as above,
and if $Z^{(r)}$, $Y_r$ and $Z_{r,j}$ are determined for $1\le r\le k-1$ with $k\ge2$, then
$Z^{(k)}$, $Y_k$ and $Z_{k,j}$ are determined by \eqref{F-5.14}, \eqref{F-5.16} and \eqref{F-5.15} in this
order.

The following lemma summarizes simple but tedious computations of $Y_k$ and $X_k$ up to $k=4$,
which will be used in the proof of the theorem below. The details of computations are left to the reader.

\begin{lemma}\label{L-5.2}
We have
\begin{align*}
2Y_1&=H^{(1)},\qquad\qquad\,X_1=H^{(1)}, \\
2Y_2&={1\over4}\bigl(H^{(1)}\bigr)^2,\qquad\,X_2={1\over2}\bigl(H^{(1)}\bigr)^2, \\
2Y_3&={1\over24}\bigl(H^{(1)}\bigr)^3-{1\over12}H^{(1)}H^{(2)}-{1\over12}H^{(2)}H^{(1)}
+{1\over6}\sum_{j=1}^nw_jH_jH^{(1)}H_j, \\
X_3&={1\over6}\Biggl(\bigl(H^{(1)}\bigr)^3-{1\over2}H^{(1)}H^{(2)}-{1\over2}H^{(2)}H^{(1)}
+\sum_{j=1}^nw_jH_jH^{(1)}H_j\Biggr), \\
2Y_4&={1\over192}(H^{(1)})^4-{1\over48}(H^{(1)})^2H^{(2)}-{1\over24}H^{(1)}H^{(2)}H^{(1)}
-{1\over48}H^{(2)}(H^{(1)})^2 \\
&\qquad\quad+{1\over24}\sum_{j=1}^nw_jH^{(1)}H_jH^{(1)}H_j
+{1\over24}\sum_{j=1}^nw_jH_jH^{(1)}H_jH^{(1)}, \\
X_4&={1\over24}\Biggl((H^{(1)})^4-(H^{(1)})^2H^{(2)}-2H^{(1)}H^{(2)}H^{(1)}
-H^{(2)}(H^{(1)})^2 \\
&\qquad\qquad+2\sum_{j=1}^nw_jH^{(1)}H_jH^{(1)}H_j
+2\sum_{j=1}^nw_jH_jH^{(1)}H_jH^{(1)}\Biggr).
\end{align*}
\end{lemma}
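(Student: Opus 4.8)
The plan is to carry out the recursive scheme set up in equations \eqref{F-5.14}--\eqref{F-5.16} concretely, for $k=1,2,3,4$, and simply record the outputs. This is a bootstrapping computation: once $Z^{(r)}$, $Y_r$, $Z_{r,j}$ are known for all $r<k$, one first computes $Z^{(k)}$ from \eqref{F-5.14} (a polynomial in the already-known $Z_{k_i,j}$'s with $k_i<k$), then $Y_k$ from \eqref{F-5.16}, then $Z_{k,j}$ from \eqref{F-5.15}; finally $X_k$ is assembled from $\{Y_l\}_{l\le k}$ via \eqref{F-5.13}. So the proof is really just an organized unwinding of these four formulas, and the ``hard part'' is purely bookkeeping — keeping track of the noncommuting products $H_j H^{(1)} H_j$, $H^{(1)} H_j H^{(1)} H_j$, etc., and the binomial/partition coefficients.

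First I would do $k=1$: \eqref{F-5.14} gives $Z^{(1)}=0$, then \eqref{F-5.16} gives $2Y_1=H^{(1)}$, then \eqref{F-5.15} gives $Z_{1,j}=2Y_1-H_j=H^{(1)}-H_j$, and \eqref{F-5.13} gives $X_1=2Y_1=H^{(1)}$. For $k=2$: in \eqref{F-5.14} the only term is $r=2$, $k_1=k_2=1$, giving $Z^{(2)}=\tfrac12\sum_j w_j Z_{1,j}^2=\tfrac12\sum_j w_j(H^{(1)}-H_j)^2$; expanding and using $\sum_j w_j=1$ this is $\tfrac12\bigl((H^{(1)})^2-H^{(1)}H^{(2?)}\cdots\bigr)$ — more precisely $\tfrac12\sum_j w_j\bigl((H^{(1)})^2-H^{(1)}H_j-H_jH^{(1)}+H_j^2\bigr)=\tfrac12\bigl(-(H^{(1)})^2+H^{(2)}\bigr)$. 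Then \eqref{F-5.16} with $k=2$ gives $2Y_2=Z^{(2)}-Y_1^2+H^{(1)}Y_1-\tfrac12 H^{(2)}$; substituting $Y_1=\tfrac12 H^{(1)}$ and the value of $Z^{(2)}$ yields $2Y_2=\tfrac14(H^{(1)})^2$ after the $H^{(2)}$ terms cancel, and $X_2=2Y_2+Y_1^2=\tfrac14(H^{(1)})^2+\tfrac14(H^{(1)})^2=\tfrac12(H^{(1)})^2$, matching the statement.

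Then I would push through $k=3$ and $k=4$ the same way. At $k=3$ the partition sums in \eqref{F-5.14} involve $(r,\text{partition})\in\{(2,(1,2)),(2,(2,1)),(3,(1,1,1))\}$, and in \eqref{F-5.15}/\eqref{F-5.16} one needs the $l=1,2,3$ terms with the already-computed $H^{(l)}$ and $Y_r$ substituted; the new feature is the genuinely noncommutative quantity $\sum_j w_j H_j H^{(1)} H_j$, which appears because $Z_{2,j}$ contains $Y_1 H_j$-type cross terms. Collecting everything gives the stated $2Y_3$ and then $X_3=2Y_3+2Y_1Y_2$. At $k=4$ the computation is the longest — the partitions of $4$ into parts $\ge1$ must all be enumerated for \eqref{F-5.14}, and \eqref{F-5.16} brings in $l$ up to $4$ — and the two new noncommutative quantities $\sum_j w_j H^{(1)}H_jH^{(1)}H_j$ and $\sum_j w_j H_jH^{(1)}H_jH^{(1)}$ survive; assembling $X_4=2Y_4+2Y_1Y_3+Y_2^2$ produces the final formula.

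The main obstacle is not conceptual but arithmetic: the cancellations among the $H^{(2)}$-, $H^{(3)}$-, $H^{(4)}$-type terms are delicate (for instance, in every $Y_k$ the ``mixed'' symmetric terms must combine so that only the fully symmetrized noncommutative residues remain), and it is easy to drop a factor of $\tfrac1{r}$ or a partition-multiplicity. Since the statement explicitly invites leaving these details to the reader, I would present the $k=1,2$ cases in the manner above to fix the method, remark that $k=3,4$ are entirely analogous (noting precisely which new noncommutative monomials enter at each stage), and record the final expressions for $Y_3,X_3,Y_4,X_4$ as in the lemma, with the understanding that the intermediate $Z^{(k)}$ and $Z_{k,j}$ are obtained along the way by \eqref{F-5.14} and \eqref{F-5.15}.
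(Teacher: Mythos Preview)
Your approach is exactly what the paper intends: it explicitly says the lemma ``summarizes simple but tedious computations\dots\ left to the reader,'' and your recursive use of \eqref{F-5.13}--\eqref{F-5.16} is precisely that computation. One caution on the bookkeeping in your sketch: in the $k=2$ step the $l=1$ sum in \eqref{F-5.16} contributes $H^{(1)}Y_1+Y_1H^{(1)}$ (not just $H^{(1)}Y_1$), and for $X_4$ one needs $Y_1Y_3+Y_3Y_1$ rather than $2Y_1Y_3$ since $Y_1$ and $Y_3$ need not commute --- these slips do not affect the method but would throw off the arithmetic if carried through literally.
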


We give one more lemma to use below.

\begin{lemma}\label{L-5.3}
Let $\|\cdot\|$ be a strictly increasing unitarily invariant norm and let $A,B\in\bP_m$. If $A\prec_{\log}B$
and $\|A\|=\|B\|$, then $\lambda(A)=\lambda(B)$, and hence $\Tr\,f(A)=\Tr\,f(B)$ and $\|f(A)\|=\|f(B)\|$ for any
real function $f$ on $(0,\infty)$.
\end{lemma}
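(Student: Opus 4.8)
The plan is to reduce everything to the single claim that $\lambda(A)=\lambda(B)$. Once that is known, the remaining assertions are immediate: $\Tr\,f(A)=\sum_i f(\lambda_i(A))=\sum_i f(\lambda_i(B))=\Tr\,f(B)$, and, writing $\Phi$ for the symmetric gauge function of $\|\cdot\|$, $\|f(A)\|=\Phi\bigl((|f(\lambda_i(A))|)^\downarrow\bigr)=\Phi\bigl((|f(\lambda_i(B))|)^\downarrow\bigr)=\|f(B)\|$ since $s(f(A))$ and $s(f(B))$ are the same multiset.

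To prove $\lambda(A)=\lambda(B)$, I would first invoke the already-cited fact that $A\prec_{\log}B$ implies the weak majorization $\lambda(A)\prec_w\lambda(B)$ (\cite[Proposition 4.4.13]{Hi1}). By the standard decomposition lemma for weak majorization (\cite{MOA,Bh}), there is a vector $c\in\bR_+^m$, which we may take decreasingly ordered, with $\lambda_i(A)\le c_i$ for every $i$ and $c\prec\lambda(B)$ (ordinary majorization). Take the spectral decomposition $A=\sum_i\lambda_i(A)P_i$ with rank-one orthogonal projections $P_i$, ordered so that $\lambda_1(A)\ge\cdots\ge\lambda_m(A)$, and set $C:=\sum_i c_iP_i$. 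Then $0<A\le C$ and $\lambda(C)=c$.

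Now $\lambda(C)=c\prec_w\lambda(B)$ gives $\|C\|\le\|B\|$, hence $\|A\|\le\|C\|\le\|B\|=\|A\|$, so $\|A\|=\|C\|$; since $\|\cdot\|$ is strictly increasing and $A\le C$, we conclude $A=C$, i.e. $\lambda(A)=c$, so in particular $\lambda(A)\prec\lambda(B)$. On the other hand $A\prec_{\log}B$ gives $\det A=\det B$, i.e. $\sum_i(-\log\lambda_i(A))=\sum_i(-\log\lambda_i(B))$. Since $t\mapsto-\log t$ is strictly convex on $(0,\infty)$, the equality case of the majorization inequality for strictly convex functions (equivalently, the fact that every nontrivial $T$-transform strictly increases the product of the coordinates) upgrades $\lambda(A)\prec\lambda(B)$ with equal coordinate sums of $-\log$ to $\lambda(A)=\lambda(B)$, as desired.

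The only slightly delicate point is this last step: one must have on hand the equality condition in $\sum_i g(\lambda_i(A))\le\sum_i g(\lambda_i(B))$ for convex $g$, namely that strict convexity of $g$ (here $g=-\log$) forces the two decreasingly ordered eigenvalue lists to coincide. This is classical, but it is precisely where the two hypotheses $A\prec_{\log}B$ and $\|A\|=\|B\|$ are used together — the norm squeeze produces ordinary majorization $\lambda(A)\prec\lambda(B)$, and the determinant identity built into log-majorization then pins the spectra down.
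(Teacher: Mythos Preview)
Your argument is correct. The two-step route---first use the weak-majorization decomposition $\lambda(A)\le c\prec\lambda(B)$ together with strict monotonicity of $\|\cdot\|$ to upgrade $\lambda(A)\prec_w\lambda(B)$ to genuine majorization $\lambda(A)\prec\lambda(B)$, and then use the determinant equality built into $\prec_{\log}$ and strict convexity of $-\log$ to force $\lambda(A)=\lambda(B)$---is sound, and your deduction of the $\Tr f$ and $\|f(\cdot)\|$ consequences from $\lambda(A)=\lambda(B)$ is exactly what the paper does for that part. One cosmetic remark: you do not actually need $c$ to be decreasingly ordered, since $C:=\sum_i c_iP_i$ satisfies $A\le C$ and $\lambda(C)=c^\downarrow\prec\lambda(B)$ regardless; so that clause can be dropped rather than justified.

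As for comparison with the paper: there is essentially nothing to compare. The paper's proof of this lemma consists of a single citation (``The first assertion is \cite[Lemma 2.2]{Hi1}'') for $\lambda(A)=\lambda(B)$ and then the same one-line observation about $f(A),f(B)$ that you give. Your write-up therefore supplies, self-contained, the content that the paper outsources; the argument you give is the standard one and is presumably close to what the cited lemma contains.
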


\begin{proof}
The first assertion is \cite[Lemma 2.2]{Hi1} and the latter follows since $\lambda(A)=\lambda(B)$ implies that
$\lambda(f(A))=\lambda(f(B))$ and hence $s(f(A))=s(f(B))$.
\end{proof}

The following is our main result, where we characterize the equality case in the norm inequality deriving from
log-majorization \eqref{F-5.7} for the Karcher mean.

\begin{theorem}\label{T-5.4}
Let $A_1,\dots,A_n\in\bP_m$ and $\omega=(w_1,\dots,w_n)$ be a weight vector with $w_j>0$,
$1\le j\le m$. Let $\|\cdot\|$ be a strictly increasing unitarily invariant norm. Then the following
conditions are equivalent:
\begin{itemize}
\item[\rm(a)] $A_j$ commutes with $LE_\omega(A_1,\dots,A_n)$ for any $j=1,\dots,n$;
\item[\rm(b)] $G_\omega(A_1^t,\dots,A_n^t)=LE_\omega(A_1^t,\dots,A_n^t)$ for all $t\in\bR$.
\item[\rm(c)] $\|G_\omega(A_1^t,\dots,A_n^t)\|=\|LE_\omega(A_1^t,\dots,A_n^t)\|$ for some $t\ne0$;
\item[\rm(d)] $\|G_\omega(A_1^t,\dots,A_n^t)^{1/t}\|=\|LE_\omega(A_1,\dots,A_n)\|$  for some $t\ne0$.
\end{itemize}
\end{theorem}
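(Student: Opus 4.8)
The plan is to prove the cycle of implications
(a) $\Rightarrow$ (b) $\Rightarrow$ (c), (b) $\Rightarrow$ (d), and then close the loop with (c) $\Rightarrow$ (a) and (d) $\Rightarrow$ (a). The implications (a) $\Rightarrow$ (b) and (b) $\Rightarrow$ (c), (d) are the easy ones: if every $A_j$ commutes with $M:=LE_\omega(A_1,\dots,A_n)=\exp(\sum_jw_j\log A_j)$, then all the $A_j$ commute with each other on the relevant joint spectral decomposition (more precisely, $A_j=\exp(\sum_k w_k\log A_k)$-measurably diagonal), so $G_\omega$ and $LE_\omega$ both reduce to the scalar weighted geometric/arithmetic mean entrywise and coincide; the same holds after replacing $A_j$ by $A_j^t$, giving (b). Then (b) trivially implies (c) and, since $G_\omega(A_1^t,\dots,A_n^t)^{1/t}=LE_\omega(A_1^t,\dots,A_n^t)^{1/t}=LE_\omega(A_1,\dots,A_n)$ under (b), also (d).

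The substantive direction is (c) $\Rightarrow$ (a) (and (d) $\Rightarrow$ (a), which should follow by the same mechanism since $\|\cdot\|$ is unitarily invariant and $x\mapsto x^{1/t}$ is a fixed strictly monotone function, so equality of norms of the $1/t$-powers is equivalent to equality of norms of the original matrices via Lemma \ref{L-5.3}). Fix $t\ne0$; by replacing $A_j$ with $A_j^t$ we may assume $t=1$ after noting that condition (a) is invariant under $A_j\mapsto A_j^t$ (since $LE_\omega(A_1^t,\dots,A_n^t)=LE_\omega(A_1,\dots,A_n)^t$ commutes with $A_j$ iff $LE_\omega(A_1,\dots,A_n)$ does). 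So assume $\|G_\omega(A_1,\dots,A_n)\|=\|LE_\omega(A_1,\dots,A_n)\|$. By log-majorization \eqref{F-5.7} (with $p=1$) we have $G_\omega(A_1,\dots,A_n)\prec_{\log}LE_\omega(A_1,\dots,A_n)$, so Lemma \ref{L-5.3} gives $\lambda(G_\omega(A_1,\dots,A_n))=\lambda(LE_\omega(A_1,\dots,A_n))$, hence equality of norms $\|G_\omega(A_1^p,\dots,A_n^p)^{1/p}\|=\|LE_\omega(A_1,\dots,A_n)\|$ for \emph{all} $p>0$ by monotonicity \eqref{F-5.6} together with \eqref{F-5.7} squeezing, and in fact $\Tr f(G_\omega(A_1^p,\dots,A_n^p)^{1/p})=\Tr f(LE_\omega(A_1,\dots,A_n))$ for every continuous $f$ on $(0,\infty)$, again by the lemma applied at each $p$. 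The idea is now to scale: replace $A_j$ by $A_j^s$, let $s\to0$, and extract an infinitesimal consequence. Writing $A_j=e^{H_j}$ and using the notation \eqref{F-5.9}--\eqref{F-5.12}, we have $X(s)=G_\omega(e^{sH_1},\dots,e^{sH_n})$ and $X(s)^{1/s}\to LE_\omega(A_1,\dots,A_n)=e^{H^{(1)}}$ as $s\to0$ (the Lie--Trotter formula \eqref{F-5.8}). The equality of all symmetric functions forces $\Tr\,(X(s))^{k}=\Tr\,(e^{sH^{(1)}})^{k}$ for all $k$ and all small $s$, i.e. the moments of $X(s)$ agree with those of $e^{sH^{(1)}}$ to all orders in the formal parameter $s$; comparing the power-series coefficients from Lemma \ref{L-5.2} against those of $e^{sH^{(1)}}=I+sH^{(1)}+\tfrac{s^2}{2}(H^{(1)})^2+\cdots$ yields identities among traces of noncommutative monomials in $H_1,\dots,H_n$.

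The main obstacle — and the reason the result is only "similar to" and "not fully described as" the two-variable case of \cite{Hi} — is that we cannot conclude $X(s)=e^{sH^{(1)}}$ directly (that would need analyticity/uniqueness statements not available), only that all their spectral data coincide. So the plan is: from $\Tr\,X(s)^2=\Tr\,e^{2sH^{(1)}}$ we get, reading off the $s^2$ coefficient via $X_1=H^{(1)}$, $X_2=\tfrac12(H^{(1)})^2$ (Lemma \ref{L-5.2}), an identity that is automatically satisfied; the first nontrivial constraint appears at order $s^4$ (or from $\Tr X(s)^3$ at order $s^3$) where $X_3$, $X_4$ involve the genuinely noncommutative terms $\sum_j w_j H_j H^{(1)} H_j$ and $\sum_j w_j H_j H^{(1)} H_j H^{(1)}$. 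Expanding $\Tr\,f(X(s))=\Tr\,f(e^{sH^{(1)}})$ for, say, $f(x)=x^3$ and $f(x)=x^4$ and isolating the lowest-order surviving coefficient, one obtains (after using cyclicity of the trace) an identity of the form $\Tr\big[(\sum_j w_j H_j H^{(1)} H_j - (H^{(1)})^3)\cdot H^{(1)}\big]=0$, equivalently $\sum_j w_j \Tr\,[(H_j - H^{(1)})H^{(1)}(H_j-H^{(1)})H^{(1)}]=0$ after completing the square; since each summand is $\Tr\,(C_j H^{(1)})^2\ge 0$ — no, more carefully, it is $\Tr\,[(H_j-H^{(1)})H^{(1)}]^2$ which need not be a square, so the correct move is to symmetrize and recognize it as $-\sum_j w_j\big\|[H_j,\,(H^{(1)})^{1/2}\cdot]\,\big\|$-type quantity — one gets $\sum_j w_j \Tr\,|[H_j,\,g(H^{(1)})]|^2 = 0$ for a suitable $g$, forcing $[H_j,H^{(1)}]=0$ for every $j$, which is exactly condition (a) since $LE_\omega=e^{H^{(1)}}$. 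Making the "suitable $g$" and the positivity argument precise — i.e. showing that \emph{some} finite combination of the low-order moment identities assembles into a manifestly nonnegative sum of squared norms of commutators $[H_j,H^{(1)}]$ — is the technical heart; once $[H_j,H^{(1)}]=0$ for all $j$ we are done, and this also shows (d) $\Rightarrow$ (a) since the hypothesis in (d) is the $p\to$ fixed-$t$ version of the same equality of norms after applying $x\mapsto x^t$.
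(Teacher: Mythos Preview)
Your argument for (a)$\Rightarrow$(b) contains a genuine error: commuting with $LE_\omega(A_1,\dots,A_n)$ does \emph{not} force the $A_j$ to commute with one another, so there is no joint diagonalization to appeal to (take $H_1,H_2$ noncommuting and $H_3:=-H_1-H_2$ with equal weights; then $H^{(1)}=0$, $LE_\omega=I$ commutes with everything, yet $A_1A_2\ne A_2A_1$; cf.\ Remark~\ref{R-5.5}). The correct argument is the paper's: if each $H_j$ commutes with $H^{(1)}:=\sum_k w_k\log A_k$, then for $M:=e^{tH^{(1)}}$ one has $M^{-1/2}e^{tH_j}M^{-1/2}=e^{t(H_j-H^{(1)})}$, hence $\sum_j w_j\log\bigl(M^{-1/2}e^{tH_j}M^{-1/2}\bigr)=t\sum_j w_j(H_j-H^{(1)})=0$, and uniqueness of the Karcher solution gives $X(t)=M$.

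For the hard direction your overall plan coincides with the paper's, but you have not located the decisive identity, and the trial expressions you write (involving $H_j-H^{(1)}$ or an unspecified $g$) are not the right ones. The paper first passes from (c) to (d) via Lemma~\ref{L-5.3}, then proves (d)$\Rightarrow$(a) as follows. From $\|X(t_0)^{1/t_0}\|=\|e^{H^{(1)}}\|$ and the squeeze $X(t_0)^{1/t_0}\prec_{\log}X(t)^{1/t}\prec_{\log}e^{H^{(1)}}$ for $0<t\le t_0$ (by \eqref{F-5.6}, \eqref{F-5.7}), Lemma~\ref{L-5.3} yields $\Tr\,X(t)=\Tr\,e^{tH^{(1)}}$ for all $t\in(0,t_0]$. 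No higher powers $f(x)=x^k$ are needed: simply compare the power-series coefficients of $\Tr\,X(t)$ (read off from Lemma~\ref{L-5.2}) with those of $\Tr\,e^{tH^{(1)}}$. Orders $t,t^2,t^3$ match automatically; order $t^4$ gives exactly
\[
\Tr\bigl(H^{(1)}\bigr)^2H^{(2)}=\sum_{j=1}^n w_j\,\Tr\,H^{(1)}H_jH^{(1)}H_j,
\]
i.e.\ $\sum_j w_j\bigl(\Tr(H^{(1)})^2H_j^2-\Tr\,H^{(1)}H_jH^{(1)}H_j\bigr)=0$. Since
\[
2\bigl(\Tr(H^{(1)})^2H_j^2-\Tr\,H^{(1)}H_jH^{(1)}H_j\bigr)
=\Tr\bigl(H^{(1)}H_j-H_jH^{(1)}\bigr)^*\bigl(H^{(1)}H_j-H_jH^{(1)}\bigr)\ge0,
\]
each summand vanishes and hence $[H^{(1)},H_j]=0$ for every $j$. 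This is precisely the ``sum of squared commutators'' you were searching for; the relevant pairing is $(H^{(1)})^2H^{(2)}$ versus $\sum_j w_j H^{(1)}H_jH^{(1)}H_j$, not $(H^{(1)})^3$ versus $\sum_j w_j H_jH^{(1)}H_j$.
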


\begin{proof}
Write $A_j=e^{H_j}$ with $H_j\in\bH_m$ for $1\le j\le n$, and use the notations
\begin{align*}
X(t)&:=G_\omega(A_1^t,\dots,A_n^t)=G_\omega\bigl(e^{tH_1},\dots,e^{tH_n}\bigr), \\
H^{(1)}&:=\sum_{j=1}^nw_j\log A_j=\sum_{j=1}^nw_jH_j,
\end{align*}
as in \eqref{F-5.9} and \eqref{F-5.12}, so that $LE_\omega(A_1^t,\dots,A_n^t)=e^{tH^{(1)}}$ for all $t\in\bR$,
and condition (a) is equivalent to that $H_j$ commutes with $H^{(1)}$ for any $j$.

(a)$\implies$(b).\enspace
Assume (a). Then for any fixed $t\in\bR$, letting $M:=e^{tH^{(1)}}$ one has
\[
M^{-1/2}e^{tH_j}M^{-1/2}=e^{t(H_j-H^{(1)})},\qquad1\le j\le n,
\]
so that
\[
\sum_{j=1}^nw_j\log M^{-1/2}e^{tH_j}M^{-1/2}=t\sum_{j=1}^nw_j\bigl(H_j-H^{(1)}\bigr)=0.
\]
Hence (b) holds.

(b)$\implies$(c) and (b)$\implies$(d) are obvious.

(c)$\implies$(d).\enspace
Assume that $\|X(t_0)\|=\big\|e^{t_0H^{(1)}}\big\|$ for some $t_0\ne0$. By replacing $H_j$ with $-H_j$ for all
$j$ if necessary, we may assume $t_0>0$. Since $X(t_0)\prec_{\log}e^{t_0H^{(1)}}$ by \eqref{F-5.7}, it follows
from Lemma \ref{L-5.3} that $\big\|X(t_0)^{1/t_0}\big\|=\big\|e^{H^{(1)}}\big\|$.

(d)$\implies$(a).\enspace
Assume that $\big\|X(t_0)^{1/t_0}\big\|=\big\|e^{H^{(1)}}\big\|$ for some $t_0\ne0$. Since $X(-t_0)^{-1/t_0}=X(t_0)$
by self-duality of $G_\omega$ (see \cite[p.\ 269]{LL1}), we may assume $t_0>0$. Since, by \eqref{F-5.6} and
\eqref{F-5.7},
\[
X(t_0)^{1/t_0}\prec_{\log}X(t)^{1/t}\prec_{\log}e^{H^{(1)}},\qquad t\in(0,t_0],
\]
one has $\|X(t)^{1/t}\|=\big\|e^{H^{(1)}}\big\|$ for all $t\in(0,t_0]$. Hence it follows from Lemma \ref{L-5.3} again
that
\begin{align}\label{F-5.17}
\Tr\,X(t)=\Tr\,e^{tH^{(1)}},\qquad t\in(0,t_0].
\end{align}
Now, we recall the power series expansion of $X(t)$ in \eqref{F-5.11}, which is norm convergent near $t=0$.
By taking the trace we write
\[
\Tr\,X(t)=\Tr\,I+t\Tr\,X_1+t^2\Tr\,X_2+t^3\Tr\,X_3+t^4\Tr\,X_4+\cdots,
\]
as well as
\[
\Tr\,e^{tH^{(1)}}=\Tr\,I+t\Tr\,H^{(1)}+{t^2\over2}\Tr\bigl(H^{(1)}\bigr)^2+{t^3\over6}\Tr\bigl(H^{(1)}\bigr)^3
+{t^4\over24}\Tr(H^{(1)})^4+\cdots.
\]
Then by Lemma \ref{L-5.2} one has
\begin{align*}
\Tr\,X_1&=\Tr\,H^{(1)},\qquad \Tr\,X_2={1\over2}\Tr\bigl(H^{(1)}\bigr)^2, \\
\Tr\,X_3&={1\over6}\Tr\Biggl(\bigl(H^{(1)}\bigr)^3-H^{(1)}H^{(2)}+\sum_{j=1}^nw_jH^{(1)}H_j^2\Biggr)
={1\over6}\Tr\bigl(H^{(1)}\bigr)^3, \\
\Tr\,X_4&={1\over24}\Tr\Biggl(\bigl(H^{(1)}\bigr)^4-4\bigl(H^{(1)}\bigr)^2H^{(2)}
+4\sum_{j=1}^nw_jH^{(1)}H_jH^{(1)}H_j\Biggr),
\end{align*}
where $H^{(2)}:=\sum_{j=1}^nw_jH_j^2$ in the above equations. Therefore, for equality \eqref{F-5.17} to
hold, one must have $\Tr\,X_4={1\over24}\Tr\bigl(H^{(1)}\bigr)^4$ so that
\[
\Tr\bigl(H^{(1)}\bigr)^2H^{(2)}=\Tr\sum_{j=1}w_jH^{(1)}H_jH^{(1)}H_j.
\]
This is equivalently written as
$\Tr\sum_{j=1}^nw_j\bigl(H^{(1)}\bigr)^2H_j^2=\Tr\sum_{j=1}^nw_jH^{(1)}H_jH^{(1)}H_j$, that is,
\[
\sum_{j=1}^nw_j\Tr\bigl(H^{(1)}H_j-H_jH^{(1)}\bigr)^*\bigl(H^{(1)}H_j-H_jH^{(1)}\bigr)=0,
\]
which yields (a).
\end{proof}

\begin{remark}\label{R-5.5}\rm
Theorem \ref{T-5.4} is the multivariate version of \cite[Theorem 2.1]{Hi}. In fact, we note that for the
two-variable case ($n=2$), condition (a) is equivalent to $A_1A_2=A_2A_1$, while for $n>2$, (a) is strictly
weaker than that $A_iA_j=A_jA_i$ for all $i,j$.  However, the condition corresponding to (i) of
\cite[Theorem 3.1]{Hi} is missing in Theorem \ref{T-5.4}, which is condition (e) specified below. In the next
section we discuss this missing condition in connection with the analyticity question of $G_\omega$.
\end{remark}

\section{Analyticity question of the Karcher mean}\label{Sec-6}

In the setting of Theorem \ref{T-5.4}, the multivariate counterpart of condition (i) of \cite[Theorem 3.1]{Hi} is
given as follows:
\begin{itemize}
\item[(e)] $\|G_\omega(A_1^t,\dots,A_n^t)^{1/t}\|$ is not strictly decreasing on $(0,\infty)$.
\end{itemize}
It is obvious that (b) implies (e). Hence any of the conditions of Theorem \ref{T-5.4} implies (e). It is desirable
for us to prove the converse direction. Recall that the real analyticity of $t\mapsto A^t\#_\alpha B^t$ is crucial in
the proof of (i)$\implies$(iv) of \cite[Theorem 3.1]{Hi}. Similarly, we notice that when the real analyticity of
$t\mapsto\Tr\,X(t)$ in $(0,\infty)$, where $X(t):=G_\omega(A_1^t,\dots,A_n^t)$, is known, the proof of
(e)$\implies$(a) is easy as follows: Since $\|X(t)^{1/t}\|$ is decreasing on $(0,\infty)$, condition (e) says that
$\|X(t)^{1/t}\|$ is constant on $[p,q]$ for some $0<p<q$. Since \eqref{F-5.6} gives
$X(q)^{1/q}\prec_{\log}X(t)^{1/t}$ for $0<t\le q$, it follows from Lemma \ref{L-5.3} that  $\Tr\,X(q)^{t/q}=\Tr\,X(t)$
for $t\in[p,q]$. From real analyticity it must hold that $\Tr\,X(q)^{t/q}=\Tr\,X(t)$ for all $t>0$, which is, by
replacing $t$ with $qt$, rewritten as $\Tr\,X(q)^t=\Tr\,X(qt)$ for all $t>0$. Since \eqref{F-5.6} yields
$X(q)^t\prec_{\log}X(qt)$ for $0<t\le1$, one has $\Tr\,X(q)=\Tr\,X(qt)^{1/t}$ for all $t\in(0,1]$ by Lemma \ref{L-5.3}
again. Since $X(qt)^{1/t}\to e^{qH^{(1)}}$ as $t\searrow0$ by the Lie--Trotter formula in \eqref{F-5.8}, we finally
obtain $\Tr\,X(q)=\Tr\,e^{qH^{(1)}}$, which means that condition (c) holds with the trace-norm. Hence (a) follows
since (c)$\implies$(a).

In view of what we have explained above, we are concerned with the following question:
\begin{itemize}
\item[(A)] Is the function $t\mapsto\Tr\,G_\omega(A_1^t,\dots,A_n^t)$ real analytic in $\bR$ for every 
$A_1,\dots,A_n\in\bP_m$?
\end{itemize}
Or, more strongly,
\begin{itemize}
\item[(B)] Is the map $(A_1,\dots,A_n)\mapsto G_\omega(A_1,\dots,A_n)$ real analytic in the whole
$(\bP_m)^n$?
\end{itemize}

As for question (B), we have known so far that $G_\omega(A_1,\dots,A_n)$ is separately real analytic in
each variable and locally real analytic in a neighborhood of each diagonal $(A,\dots,A)$; see \cite{Law,LL2}.
Besides these, a detailed analysis was made in \cite{LL3} in the restricted setting of the three-variable Karcher
mean $G(A,B,C)$ (with the uniform weight) of $2\times2$ $A,B,C$. Question (B) seems indispensable from
the point of view of non-commutative function theory (see, e.g., \cite{KV} and \cite[Part Two]{AMY}) as far as
we think of $G_\omega$ as a good example of non-commutative functions.

Below let us give a brief explanation on difficulty in question (B) when we follow the method in the proof of
\cite[Theorem 6.3]{LL2}. Define the map $F:(\bP_m)^n\times\bP_m\to\bH_m$ by
\begin{align}\label{F-6.1}
F(A_1,\dots,A_m,X)=F_{A_1,\dots,A_n}(X):=\sum_{j=1}^nw_j\log X^{1/2}A_j^{-1}X^{1/2}.
\end{align}
Then $X=G_\omega(A_1,\dots,A_n)$ if and only if $F(A_1,\dots,A_n,X)=0$, and it is clear that $F$ is real
analytic in $(\bP_m)^n\times\bP_m$. Thus, by the real analytic implicit function theorem (see, e.g.,
\cite[Theorem 2.3.5]{KP}), to settle (B), it suffices to prove that the Fr\'echet derivative
$DF_{A_1,\dots,A_n}(X)$ at any $X\in\bP_m$ for each $(A_1,\dots,A_n)\in(\bP_m)^n$ is invertible. For
arbitrary $(A_1,\dots,A_n)$ define
\begin{align}\label{F-6.2}
F_j(X):=\log X^{1/2}A_j^{-1}X^{1/2}=\eta\circ\psi_j\circ\phi(X),
\end{align}
where $\phi(X):=X^{1/2}$, $\psi_j(Y):=YA_j^{-1}Y$ and $\eta(Z):=\log Z$. Then we can write the Fr\'echet
derivative of $F_j$ at $X$ as
\[
DF_j(X)=D\eta(X^{1/2}A_j^{-1}X^{1/2})\circ D\psi_j(X^{1/2})\circ D\phi(X),
\]
so that $DF_{A_1,\dots,A_n}(X)=\sum_{j=1}^nw_jDF_j(X)$. By Daleckii and Krein's derivative formula (see
\cite[Theorem V.3.3]{Bh}) one can see that $D\phi(X)$ and $D\eta(X^{1/2}A_j^{-1}X^{1/2})$ are invertible.
From the classical theory of Lyapunov equation one can also see that $D\psi_j(X^{1/2})$ is invertible. Hence
it follows that each $DF_j(X)$ is invertible. However, it is unknown to us how to show that
$DF_{A_1,\dots,A_n}(X)$ is invertible, while it is clear from the above argument that $G_\omega$ enjoys the
separate real analyticity and the local real analyticity on the diagonals.

\begin{remark}\label{R-6.1}\rm
Let $A_1,\dots,A_n\in\bP_m$. For any weight vector $\omega$ and $t\in[-1,1]\setminus\{0\}$, the
\emph{power mean} $P_{t,\omega}(A_1,\dots,A_n)$ was introduced in \cite{LP} as a unique solution
$X\in\bP_m$ of
\[
\sum_{j=1}^nw_j(X\#_tA_j)=X,\quad\mbox{i.e.,}\quad\sum_{j=1}^nw_j(X^{-1/2}A_jX^{-1/2})^t=I
\]
for $t\in(0,1]$, and $P_{t,\omega}(A_1,\dots,A_n):=\bigl(P_{-t,\omega}(A_1^{-1},\dots,A_n^{-1})\bigr)^{-1}$
for $t\in[-1,0)$. Note \cite{LP} that $P_{t,\omega}\to G_\omega$ as $t\to0$. Replacing \eqref{F-6.1} and
\eqref{F-6.2} with
\[
F(A_1,\dots,A_m,X):=\sum_{j=1}^nw_j\bigl((X^{-1/2}A_jX^{-1/2})^t-I\bigr)
\]
and $F_j(X):=\eta\circ\psi_j\circ\phi(X)$ where $\phi(X):=X^{-1/2}$, $\psi_j(Y):=YA_jY$ and $\eta(Z):=Z^t$,
one can argue in the same way as above. Then one can see that $P_{t,\omega}$ is separately real analytic
in each variable and locally real analytic on the diagonals, while question (B) for $P_{t,\omega}$ is unresolved
similarly to the question for $G_\omega$.
\end{remark}

\subsection*{Acknowledgments}

The author thanks Jean-Christophe Bourin and Mil\'an Mosonyi for discussions which helped to improve
the paper.

\appendix

\section{Correction of the proof of \cite[Theorem 2.1]{Hi}}\label{Sec-A}

In \cite[Theorem 2.1]{Hi} we characterized the equality case of the matrix norm inequality in \eqref{F-2.2}
when a unitarily invariant norm $\|\cdot\|$ is strictly increasing. The main part of the proof of
\cite[Theorem 2.1]{Hi} is to prove that if $\big\|(A^{p/2}B^pA^{p/2})^{1/p}\big\|$ is not strictly increasing in
$p>0$, then $AB=BA$. However, the sentence, four lines below (2.2) in \cite{Hi}, saying
\begin{align}\label{F-A.1}
\mbox{``So taking $RA_0R$ and $RB_0R$ instead of $A_0$ and $B_0$, we can assume that $A_0,B_0>0$''}
\end{align}
is not at all easy to see. The aim of this appendix is to make the proof complete.

To do so, we need the following well-known Lie--Trotter--Kato formula with a continuous parameter, originally
due to \cite{Ka} in the general unbounded operator setting, and related arguments are also found in \cite{HP0}
and \cite{Hi0}.

\begin{lemma}\label{L-A.1}
For every $A,B\in\bM_m^+$,
\[
\lim_{t\searrow0}\bigl(A^{t/2}B^tA^{t/2}\bigr)^{1/t}
=P_0\exp(P_0(\log A)P_0+P_0(\log B)P_0),
\]
where $P_0:=A^0\wedge B^0$, i.e., the orthogonal projection onto the intersection of the ranges of $A,B$.
\end{lemma}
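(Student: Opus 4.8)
The plan is to reduce to the classical invertible case via a perturbation analysis of the real-analytic family $C_t:=A^{t/2}B^tA^{t/2}$, $t\ge0$, which satisfies $C_t\to C_0:=A^0B^0A^0$ as $t\searrow0$ and $C_t=C_0+tD+O(t^2)$ with $D:=\tfrac{d}{dt}\big|_{t=0}C_t$. First I would determine the relevant part of the spectrum of $C_0$. Since $A^0,B^0$ are orthogonal projections, $C_0$ is a positive semidefinite contraction, and $C_0x=x$ with $\|x\|=1$ forces (via the chain $1=\|B^0A^0x\|^2\le\|A^0x\|^2\le1$) $A^0x=B^0x=x$; hence the eigenspace of $C_0$ for the eigenvalue $1$ is exactly $\mathrm{ran}\,P_0$ with $P_0=A^0\wedge B^0$, while the remaining eigenvalues of $C_0$ lie in $[0,1-2\delta]$ for some $\delta\in(0,\tfrac12)$. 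Because $C_t$ is an $O(t)$-perturbation of $C_0$, for small $t>0$ the Riesz projection $E_t:=\tfrac1{2\pi i}\oint_{|z-1|=\delta}(zI-C_t)^{-1}\,dz$ has rank $\dim P_0$, the eigenvalues of $C_t$ on $\mathrm{ran}\,E_t$ lie within $O(t)$ of $1$ and those on $\mathrm{ran}(I-E_t)$ lie in $[0,1-\delta]$, and the resolvent bound on $|z-1|=\delta$ gives $E_t\to P_0$ with $\|E_t-P_0\|_\infty=O(t)$.

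Next I would split $C_t^{1/t}=C_t^{1/t}(I-E_t)+C_t^{1/t}E_t$. On $\mathrm{ran}(I-E_t)$ the eigenvalues of $C_t$ are $\le1-\delta$, so $\|C_t^{1/t}(I-E_t)\|_\infty\le(1-\delta)^{1/t}\to0$. For the top part, $C_t$ restricted to its invariant subspace $\mathrm{ran}\,E_t$ is positive definite with spectrum near $1$, so $\Lambda_t:=(\log C_t)E_t$ is well defined and $C_t^{1/t}E_t=E_t\exp(t^{-1}\Lambda_t)$. Using the Taylor expansion of $\log$ near $1$ together with $\|E_tC_tE_t-E_t\|_\infty=O(t)$ gives $\Lambda_t=(E_tC_tE_t-E_t)+O(t^2)$. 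Now decompose $E_tC_tE_t-E_t=E_t(C_t-C_0)E_t+E_t(C_0-I)E_t$: since $P_0C_0=C_0P_0=P_0$, the operator $C_0-I$ is supported on $\mathrm{ran}(I-P_0)$, and $\|(I-P_0)E_t\|_\infty=\|(I-P_0)(E_t-P_0)\|_\infty=O(t)$, so the second term is $O(t^2)$; since $C_t-C_0=tD+O(t^2)$ and $E_t\to P_0$, the first term equals $tP_0DP_0+o(t)$. Finally, a direct computation of $D$ using $\tfrac{d}{dt}\big|_{t=0}A^t=\log A$ (the Hermitian operator equal to $\log A$ on $\mathrm{ran}\,A^0$ and $0$ on $\ker A$) together with $A^0P_0=B^0P_0=P_0$ yields $P_0DP_0=P_0(\log A)P_0+P_0(\log B)P_0=:M$. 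Hence $t^{-1}\Lambda_t\to M$, so $C_t^{1/t}E_t=E_t\exp(t^{-1}\Lambda_t)\to P_0\exp(M)$, and together with the vanishing of the bottom part we obtain $\lim_{t\searrow0}C_t^{1/t}=P_0\exp(M)$, the asserted formula; when $P_0=0$ this reads $\lim_{t\searrow0}C_t^{1/t}=0$, consistent with the statement.

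The main obstacle is the identification of the limit, i.e.\ upgrading the crude bound $\Lambda_t=O(t)$ to the sharp $\Lambda_t=tM+o(t)$. Naive estimates leave an $O(t)$ contribution coming from $E_tC_0E_t-E_t$ that would swamp the leading term, and pinning down the leading coefficient requires cancelling this to $O(t^2)$, which rests on the support identity $P_0C_0=C_0P_0=P_0$ combined with the rate $\|E_t-P_0\|_\infty=O(t)$ furnished by analytic perturbation theory. The subsequent computation of $D$ and of $P_0DP_0$ is routine but must be carried out carefully so as to extract precisely $P_0(\log A)P_0+P_0(\log B)P_0$.
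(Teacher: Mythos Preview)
Your proof is correct. The paper itself does not supply a proof of this lemma; it simply records it as the well-known Lie--Trotter--Kato product formula (in its continuous-parameter form) and defers to the references of Kato, Hiai--Petz, and Hiai for the argument. So there is no proof in the paper to compare with directly.

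Your route is a self-contained finite-dimensional argument via analytic perturbation theory, and it is tighter than what one extracts from the cited semigroup literature. The crux is your identification of the eigenvalue-$1$ eigenspace of $C_0=A^0B^0A^0$ with $\mathrm{ran}\,P_0$, which cleanly separates the spectrum into a ``top'' cluster near $1$ (handled by the Riesz projection $E_t$) and a ``bottom'' part in $[0,1-\delta]$ whose $1/t$-th power dies. The delicate step you flag---upgrading $\Lambda_t=O(t)$ to $\Lambda_t=tM+o(t)$---is carried out correctly: the identity $P_0C_0=C_0P_0=P_0$ (equivalently, $C_0-I=(I-P_0)(C_0-I)(I-P_0)$) together with $\|E_t-P_0\|_\infty=O(t)$ indeed forces $E_t(C_0-I)E_t=O(t^2)$, and the computation $P_0DP_0=P_0(\log A)P_0+P_0(\log B)P_0$ is straightforward once one uses $A^0P_0=B^0P_0=P_0$. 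One small point of bookkeeping worth making explicit in a write-up: since $E_t$ is an \emph{orthogonal} projection commuting with $C_t$, the expression $E_t\exp(t^{-1}\Lambda_t)$ is unambiguous, and the convergence $E_t\exp(t^{-1}\Lambda_t)\to P_0\exp(M)$ follows from $t^{-1}\Lambda_t\to M$ in norm and $E_t\to P_0$. Compared with invoking the general Trotter--Kato theorem, your approach has the advantage of being entirely elementary (no semigroup theory) and of making the rate of convergence visible.
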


\noindent
{\bf Proof of \cite[Theorem 2.1]{Hi}.}\quad
Below let us correct the proof of (i)$\implies$(v) of \cite[Theorem 2.1]{Hi}. To do this, let us suppose that
$\big\|(A^{p/2}B^pA^{p/2})^{1/p}\big\|=\big\|(A^{q/2}B^qA^{q/2})^{1/q}\big\|$ for some $0<p<q$, and start in
the same way as in the proof up to (2.2) in \cite{Hi}. Next, set $A_0:=A^p$ and $B_0:=B^p$, and let
$P$, $Q$ and $R$ denote the support projections $A_0^0$, $B_0^0$ and
$\bigl(A_0^{1/2}B_0A_0^{1/2}\bigr)^0$, respectively. Since $\bigl(A_0^{1/2}B_0A_0^{1/2}\bigr)^t\to R$ and
$A_0^{t/2}B_0^tA_0^{t/2}\to PQP$ as $t\searrow0$, we have $\lambda(R)=\lambda(PQP)$ by (2.2) of \cite{Hi}.
This implies that $PQP$ is an orthogonal projection, so that $PQ(I-P)QP=PQP-(PQP)^2=0$. Hence
$(I-P)QP=0$ so that
\begin{align}\label{F-A.2}
PQ=QP=P_0,
\end{align}
where $P_0:=P\wedge Q$. Noting that
$\ker\bigl(A_0^{1/2}B_0A_0^{1/2}\bigr)=\ker A_0+(\mathrm{ran}\,A_0\cap\ker B_0)$, we have
\[
R^\perp=P^\perp+Q^\perp P=I-P+(I-Q)P=I-P_0
\]
and hence
\begin{align}\label{F-A.3}
R=P_0=P\wedge Q.
\end{align}

From now on we modify the proof of \cite[Theorem 2.1]{Hi}. Let $H:=P\log A_0$ and $K:=Q\log B_0$. Then for every
$t>0$, we have $tH=P\log A_0^t$ so that
\begin{align}\label{F-A.4}
A_0^t=P+tH+{t^2\over2}H^2+{t^3\over3!}H^3+{t^4\over4!}H^4+\cdots,
\end{align}
and similarly,
\begin{align}\label{F-A.5}
B_0^t=Q+tK+{t^2\over2}K^2+{t^3\over3!}K^3+{t^4\over4!}K^4+\cdots.
\end{align}
Note by (2.2) of \cite{Hi} that $\lambda\bigl(\bigl(A_0^{t/2}B_0^tA_0^{t/2}\bigr)^{1/t}\bigr)$ is independent of any
$t>0$. Since Lemma \ref{L-A.1} gives
\[
\lim_{t\searrow0}\lambda\bigl(\bigl(A_0^{t/2}B_0^tA_0^{t/2}\bigr)^{1/t}\bigr)
=\lambda\bigl(P_0e^{P_0HP_0+P_0KP_0}\bigr),
\]
we have
\[
\lambda\bigl(\bigl(A_0^{t/2}B_0^tA_0^{t/2}\bigr)^{1/t}\bigr)
=\lambda\bigl(P_0e^{P_0HP_0+P_0KP_0}\bigr),\qquad t>0,
\]
and hence
\[
\lambda\bigl(A_0^{t/2}B_0^tA_0^{t/2}\bigr)
=\lambda\bigl(P_0e^{t(P_0HP_0+P_0KP_0)}\bigr),\qquad t>0.
\]
In particular,
\begin{align}\label{F-A.6}
\Tr\,A_0^tB_0^t=\Tr\,P_0e^{t(P_0HP_0+P_0KP_0)},\qquad t>0.
\end{align}
By \eqref{F-A.4} and \eqref{F-A.5} the LHS of \eqref{F-A.6} is
\begin{align}
&\Tr\biggl[PQ+t(QH+PK)+{t^2\over2}(QH^2+2HK+PK^2) \nonumber\\
&\qquad+{t^3\over6}(QH^3+3H^2K+3HK^2+PK^3) \nonumber\\
&\qquad+{t^4\over24}(QH^4+4H^3K+6H^2K^2+4HK^3+PK^4)+o\bigl(t^4\bigr)\biggr]. \label{F-A.7}
\end{align}
On the other hand, the RHS of \eqref{F-A.6} is
\begin{align}
&\Tr\biggl[P_0+t(P_0HP_0+P_0KP_0)+{t^2\over2}(P_0HP_0+P_0KP_0)^2
+{t^3\over6}(P_0HP_0+P_0KP_0)^3 \nonumber\\
&\qquad+{t^4\over24}(P_0HP_0+P_0KP_0)^4+o\bigl(t^4\bigr)\biggr] \nonumber\\
&=\Tr\biggl[P_0+t(P_0H+P_0K)+{t^2\over2}\bigl((P_0H)^2+2P_0HP_0K+(P_0K)^2\bigr) \nonumber\\
&\qquad\quad+{t^3\over6}\bigl((P_0H)^3+3(P_0H)^2P_0K+3P_0H(P_0K)^2+(P_0K)^3\bigr) \nonumber\\
&\qquad\quad+{t^4\over24}\bigl((P_0H)^4+4(P_0H)^3P_0K+4(P_0H)^2(P_0K)^2+2P_0HP_0KP_0HP_0K
\nonumber\\
&\qquad\qquad\qquad+4P_0H(P_0K)^3+(P_0K)^4\bigr)+o\bigl(t^4\bigr)\biggr] \nonumber\\
&=\Tr\biggl[PQ+t(QH+PH)+{t^2\over2}\bigl((QH)^2+2HK+(PK)^2\bigr) \nonumber\\
&\qquad\quad+{t^3\over6}\bigl((QH)^3+3(QH)^2PK+3QH(PK)^2+(PK)^3\bigr) \nonumber\\
&\qquad\quad+{t^4\over24}\bigl((QH)^4+4(QH)^3PK+4(QH)^2(PK)^2+2QHPKQHPK \nonumber\\
&\qquad\qquad\qquad+4QH(PK)^3+(PK)^4\bigr)+o\bigl(t^4\bigr)\biggr], \label{F-A.8}
\end{align}
where the last equality follows from \eqref{F-A.2} and $PH=H$, $QK=K$. Since \eqref{F-A.7} and \eqref{F-A.8}
are equal for all $t>0$, comparing first the coefficients of $t^2$ gives
\[
\Tr(QH^2+PK^2)=\Tr\bigl((QH)^2+(PK)^2\bigr).
\]
Since $\Tr\,QH(I-Q)HQ\ge0$, we have $\Tr\,QH^2\ge\Tr(QH)^2$, and similarly $\Tr\,PK^2\ge\Tr(PK)^2$.
Therefore, the above equality implies that $\Tr\,QH^2=\Tr(QH)^2$ and $\Tr\,PK^2=\Tr(PK)^2$. Then, since
$\Tr\,QH(I-Q)HQ=0$, we have $(I-Q)HQ=0$ so that
\begin{align}\label{F-A.9}
HQ=QH,
\end{align}
and similarly,
\begin{align}\label{F-A.10}
KP=PK.
\end{align}
By \eqref{F-A.2}, \eqref{F-A.9} and \eqref{F-A.10}, the term of $t^4$ in \eqref{F-A.8} becomes
\[
{t^4\over24}\Tr\bigl(QH^4+4H^3K+4H^2K^2+2HKHK+4HK^3+PK^4\bigr).
\]
Compare this with the term of $t^4$ in \eqref{F-A.7} to obtain
\[
\Tr\,H^2K^2=\Tr\,HKHK,
\]
which implies that
\[
\Tr(HK-KH)^*(HK-KH)=2\Tr(H^2K^2-HKHK)=0.
\]
Therefore, $HK=KH$. From this with \eqref{F-A.2}, \eqref{F-A.4} and \eqref{F-A.5} we have $A_0B_0=B_0A_0$,
equivalently, $AB=BA$.\qed

\begin{remark}\label{R-A.2}\rm
From \eqref{F-A.3}, \eqref{F-A.9} and \eqref{F-A.10} we eventually see that the statement in \eqref{F-A.1}
holds true in itself. Indeed, since $HP_0^\perp=H(P^\perp Q+Q^\perp)=HQ^\perp$ and
$KP_0^\perp=K(Q^\perp P+P^\perp)=KP^\perp$, we have $(HP_0^\perp)(KP_0^\perp)=HQ^\perp KP^\perp=0$
and $(KP_0^\perp)(HP_0^\perp)=KP^\perp HQ^\perp=0$. Therefore, in particular, $HP_0^\perp$ and
$KP_0^\perp$ commute, and the question reduces to that $HP_0$ and $KP_0$ commute, where $P_0=R$ by
\eqref{F-A.3}.
\end{remark}

\begin{remark}\label{R-A.3}\rm
The (weak) log-majorization in \eqref{F-2.1} and the norm inequality in \eqref{F-2.2} were extended in \cite{Hi0}
to infinite-dimensional Hilbert space operators. Furthermore, the Araki--Lieb--Thirring trace inequality was
extended by Kosaki \cite{Ko} to the von Neumann algebra setting. It is interesting to pursue the equality case
of those trace/norm inequalities in such more general settings.
\end{remark}

\subsection*{Declarations}

{\bf Conflict of interest}\enspace
The author has no conflicts to disclose.


\begin{thebibliography}{99}

\bibitem{AMY}
J. Agler, J. E. McCarthy and N. Young, {\it Operator Analysis: Hilbert Space Methods in Complex Analysis},
Cambridge University Press, Cambridge, 2020.

\bibitem{An} T. Ando, Majorization, doubly stochastic matrices, and
comparison of eigenvalues, {\it Linear Algebra Appl.} {\bf 118} (1989), 163--248.

\bibitem{AH}
T. Ando and F. Hiai, Log majorization and complementary Golden-Thompson type inequalities,
{\it Linear Algebra Appl.} {\bf 197/198} (1994), 113--131.


\bibitem{Ar}
H. Araki, On an inequality of Lieb and Thirring, {\it Lett. Math. Phys.} {\bf 19}
(1990), 167--170.

%

\bibitem{BST}
M. Berta, V.B. Scholz and M. Tomamichel, R\'enyi divergences as weighted non-commutative
vector valued $L_p$-spaces, {\it Ann. Henri Poincar\'e} {\bf 19} (2018), 1843--1867.

\bibitem{Bh}
R. Bhatia, {\it Matrix Analysis}, Springer-Verlag, New York, 1996.

\bibitem{BH}
R. Bhatia and J. Holbrook, Riemannian geometry and matrix geometric means,
{\it Linear Algebra Appl.} {\bf 413} (2006), 594--618.


\bibitem{BK}
R. Bhatia and R. L. Karandikar, Monotonicity of the matrix geometric mean,
{\it Math. Ann.} {\bf 353} (2012), 1453--1467.



\bibitem{BL}
J.-C. Bourin and E.-Y. Lee, Matrix inequalities from a two variables functional,
{\it Internat. J. Math.} {\bf 27} (2016), 1650071, 19~pp.

\bibitem{BS}
J.-C. Bourin and J. Shao, Convex maps on $\bR^n$ and positive definite matrices,
{\it Comptes Rendus Math.} {\bf 358} (2020), 645--649.


\bibitem{FFS}
J. I. Fujii, M. Fujii and Y. Seo, The Golden--Thompson--Segal type inequalities related to the
weighted geometric mean due to Lawson--Lim, {\it J. Math. Inequal.} {\bf 3} (2009), 511--518.

\bibitem{Hi}
F. Hiai, Equality cases in matrix norm inequalities of Golden-Thompson type,
{\it Linear and Multilinear Algebra} {\bf 36} (1994), 239--249.

\bibitem{Hi0}
F. Hiai, Log-majorizations and norm inequalities for exponential operators, in
{\it Linear Operators}, J. Janas, F. H. Szafraniec and J. Zem\'anek (eds.), Banach Center
Publications, Vol. 38, 1997, pp. 119--181.

\bibitem{Hi1}
F. Hiai, Matrix Analysis: Matrix monotone functions, matrix means, and majorization,
{\it Interdisciplinary Information Sciences} {\bf 16} (2010), 139--248.

\bibitem{Hi2}
F. Hiai, A generalization of Araki's log-majorization, {\it Linear Algebra Appl.} {\bf 501} (2016), 1--16.

\bibitem{Hi3}
F. Hiai, Quantum $f$-divergences in von Neumann algebras I. Standard $f$-divergences,
{\it J. Math. Phys.} {\bf 59} (2018), 102202, 27~pp.

\bibitem{HKT}
F. Hiai, R. K\"onig and M. Tomamichel, Generalized log-majorization and multivariate trace inequalities,
{\it Ann. Henri Poincar\'e} {\bf 18} (2017), 2499--2521.

\bibitem{HL}
F. Hiai and Y. Lim, Log-majorization and Lie--Trotter formula for the Cartan barycenter on probability
measure spaces, {\it J. Math. Anal. Appl.} {\bf 453} (2017), 195--211.

%

\bibitem{HP0}
F. Hiai and D. Petz,
The Golden-Thompson trace inequality is complemented,
{\it Linear Algebra Appl.} {\bf 181} (1993), 153--185.

\bibitem{HP}
F. Hiai and D. Petz, Riemannian metrics on positive definite matrices related to means. II,
{\it Linear Algebra Appl.} {\bf 436} (2012), 2117--2136.

\bibitem{KV}
D. S. Kaliuzhnyi-Verbovetskyi and V. Vinnikov, {\it Foundations of Free Noncommutative
Function Theory}, Mathematical Surveys and Monographs, American Mathematical Society,
Providence, RI, 2014.

\bibitem{Kar}
H. Karcher, Riemannian center of mass and mollifier smoothing,
{\it Comm. Pure Appl. Math.} {\bf 30} (1977), 509--541.

\bibitem{Ka}
T. Kato, Trotter's product formula for an arbitrary pair of self-adjoint contraction semigroups, in
{\it Topics in Functional Analysis}, I. Gohberg and M. Kac (eds.), Academic Press, New York, 1978,
pp. 185--195.


\bibitem{Ko}
H. Kosaki, An inequality of Araki--Lieb--Thirring (von Neumann algebra case),
{\it Proc. Amer. Math. Soc.} {\bf 114} (1992) 477--481.

\bibitem{KP}
S. G. Krantz and H. R. Parks, {\it A Primer of Real Analytic Functions}, 2nd edition,
Birkh\"auser, Boston, 2002.

\bibitem{KA}
F. Kubo and T. Ando, Means of positive linear operators, {\it Math. Ann.} {\bf 246} (1980), 205--224.

\bibitem{Law}
J. Lawson, Existence and uniqueness of the Karcher mean on unital $C^*$-algebras,
{\it J. Math. Anal. Appl.} {\bf 483} (2020), 123625, pp. 16.

\bibitem{LL1}
J. Lawson and Y. Lim, Monotonic properties of the least squares mean,
{\it Math. Ann.} {\bf 351} (2011), 267--279.

\bibitem{LL2}
J. Lawson and Y. Lim, Karcher means and Karcher equations of positive definite operators,
{\it Trans. Amer. Math. Soc. Series B} {\bf 1} (2014), 1--22.

\bibitem{LL3}
J. Lawson and Y. Lim, Analyticity of the Karcher mean coefficient maps,
{\it Linear Algebra Appl.} {\bf 627} (2021), 162--184.

\bibitem{Li}
E. H. Lieb, Convex trace functions and the Wigner--Yanase--Dyson conjecture,
{\it Adv. Math.} {\bf 11} (1973), 267--288.

\bibitem{LP}
Y. Lim and M. P\'alfia, Matrix power means and the Karcher mean,
{\it J. Funct. Anal.} {\bf 262} (2012), 1498--1514.

\bibitem{LT}
S. M. Lin and M. Tomamichel, Investigating properties of a family of quantum R\'enyi divergences,
{\it Quantum Inf. Process.} {\bf 14} (2015), 1501--1512.

\bibitem{MOA}
A. W. Marshall, I. Olkin and B. C. Arnold,
{\it Inequalities: Theory of Majorization and Its Applications}, Second ed.,
Springer-Verlag, New York, 2011.


\bibitem{Mo}
M. Moakher, A differential geometric approach to the geometric mean of symmetric positive-definite matrices,
{\it SIAM J. Matrix Anal. Appl.} {\bf 26} (2005), 735--747.

\bibitem{MBV}
M. Mosonyi, G. Bunth and P. Vrana, Geometric relative entropies and barycentric R\'enyi divergences,
arXiv:2207.14282v5 [quant-ph], 2024.

\bibitem{MH1}
M. Mosonyi and F. Hiai, On the quantum R\'enyi relative entropies and related capacity formulas,
{\it IEEE Trans. Inform. Theory} {\bf 57} (2011), 2474--2487.

\bibitem{MH2}
M. Mosonyi and F. Hiai, Some continuity properties of quantum R\'enyi divergences,
{\it IEEE Trans. Inform. Theory} {\bf 70} (2024), 2674--2700.

\bibitem{MDSFT}
M. M\"uller-Lennert, F. Dupuis, O. Szehr, S. Fehr and M. Tomamichel,
On quantum R\'enyi entropies: A new generalization and some properties,
{\it J. Math. Phys.} {\bf 54} (2013), 122203.

\bibitem{Pe}
D. Petz, Quasi-entropies for finite quantum systems,
{\it Rep. Math. Phys.} {\bf 23} (1986), 57--65.

\bibitem{PW}
W. Pusz and S. L. Woronowicz, Functional calculus for sesquilinear forms and the purification map,
{\it Rep. Math. Phys.} {\bf 8} (1975), 159--170.

\bibitem{RTT}
R. Rubboli, R. Takagi and M. Tomamichel, Mixed-state additivity properties of magic monotones
based on quantum relative entropies for single-qubit states and beyond,
arXiv:2307.08258v2 [quant-ph], 2023.

\bibitem{Sa}
M. Sababheh, Interpolated inequalities for unitarily invariant norms,
{\it Linear Algebra Appl.} {\bf 475} (2015), 240--250.

\bibitem{SBT}
D. Sutter, M. Berta and M. Tomamichel, Multivariate trace inequalities,
{\it Comm. Math. Phys.} {\bf 352} (2017), 37--58.


\bibitem{WWY}
M. M. Wilde, A. Winter and D. Yang,
Strong converse for the classical capacity of entanglement-breaking
and Hadamard channels via a sandwiched R\'enyi relative Entropy,
{\it Comm. Math. Phys.} {\bf 331} (2014), 593--622.

\bibitem{Ya}
T. Yamazaki, The Riemannian mean and matrix inequalities related to the Ando--Hiai inequality
and chaotic order, {\it Oper. Matrices} {\bf 6} (2012), 577--588.

\end{thebibliography}
\end{document}